\newtheorem{theorem}{Theorem}
\theoremstyle{plain}
\newtheorem{corollary}{Corollary}
\newtheorem{lemma}{Lemma}
\newtheorem{proposition}{Proposition}
\newtheorem{remark}{Remark}
\numberwithin{equation}{section}
\begin{document}
\title[Unification and refinements of certain inequalities]{Unification and
refinements of Jordan, Adamovi\'{c}-Mitrinovi\'{c}and and Cusa's inequalities%
}
\author{Zheng-Hang Yang}
\address{Power Supply Service Center, Zhejiang Electric Power Corporation
Research Institute, Hangzhou City, Zhejiang Province, 310009, China}
\email{yzhkm@163.com}
\date{April 8, 2013}
\subjclass[2010]{Primary 26D05, 33B10; Secondary 26D15, 26E60}
\keywords{inequality, trigonometric function, monotonicity, mean}
\thanks{This paper is in final form and no version of it will be submitted
for publication elsewhere.}

\begin{abstract}
In this paper, we find some new sharp bounds for $\left( \sin x\right) /x$,
which unify and refine Jordan, Adamovi\'{c}-Mitrinovi\'{c}and and Cusa's
inequalities. As applications of main results, some new Shafer-Fink type
inequalities for arc sine function and ones for certain bivariate means are
established, and a simpler but more accurate estimate for sine integral is
derived.
\end{abstract}

\maketitle

\section{Introduction}

The classical Jordan's inequality \cite{Mitrinovic.AI.1970} states that for $%
x\in (0,\pi /2)$ 
\begin{equation}
\frac{2}{\pi }<\frac{\sin x}{x}<1.  \label{Jordan}
\end{equation}

Some new developments on refinements, generalizations and applications of
Jordan's inequality can be found in \cite{Qi.JIA.2009.271923} and related
references therein.

In the recent past, the following two-side inequality%
\begin{equation}
\left( \cos x\right) ^{1/3}<\frac{\sin x}{x}<\frac{2+\cos x}{3}\text{ \ }%
\left( 0<x<\frac{\pi }{2}\right)  \label{M-C}
\end{equation}%
has attracted the attention of many scholars (see, e.g., \cite%
{Chen.JIA.2011.136}, \cite{Iyengar.6.1945}, \cite{Mortitc.MIA.14.3.2011}, 
\cite{Neuman.MIA.13.4.2010}, \cite{Neuman.JMI.5.4.2011}, \cite%
{Qi.JIA.2009.271923}, \cite{Sandor.RGMIA.8.3.2005}, \cite{Wu.PMD.75.3-4.2009}%
, \cite{Zhu.CMA.58.2009}, \cite{Yang.arxiv.1206.4911.2012}, \cite%
{Yang.arxiv.1206.5502.2012}, \cite{Yang.JMI.2013}), where the left
inequality was obtained by Adamovi\'{c} and Mitrinovi\'{c} (see \cite[2, p.
238]{Mitrinovic.AI.1970}), while the right one is due to Cusa and Huygens
(see, e.g., \cite{Huygens}) and it is now known as \emph{Cusa's inequality} 
\cite{Chen.JIA.2011.136}, \cite{Mortitc.MIA.14.3.2011}, \cite%
{Neuman.MIA.13.4.2010}, \cite{Sandor.RGMIA.8.3.2005}, \cite{Zhu.CMA.58.2009}.

In \cite{Oppenheim.AMM.64.6.1957}, the following problem was posed: For each 
$p>0$ there is a greatest $q$ and a least $r$ such that%
\begin{equation*}
\frac{q\sin x}{1+p\cos x}<x<\frac{r\sin x}{1+p\cos x}
\end{equation*}%
for $x\in \left( 0,\pi /2\right) $. Determine $q$ and $r$ as functions of $p$%
. It was solved by Carver in \cite{Carver.AMM.65.2.1958}. In \cite[p. 238,
3.4.15]{Mitrinovic.AI.1970}, it was listed that 
\begin{equation*}
\frac{\left( 1+p\right) \sin x}{1+p\cos x}<x<\frac{(\pi /2)\sin x}{1+p\cos x}
\end{equation*}%
for $p\in (0,1/2]$ and $x\in \lbrack 0,\pi /2]$. Wu \cite{Wu.SCM.11.4.2008}
proved that%
\begin{equation}
\frac{\left( 1+p\right) \cos x}{1+p\cos x}<\frac{\sin x}{x}<\frac{1+q}{%
1+q\cos x}  \label{Wu1}
\end{equation}%
hold for $x\in \left( 0,\pi /2\right) $, $p\in \left[ -1,2\right] $, $q\in
\lbrack -1/4,\infty )$. In particular, he obtained that for $x\in \left(
0,\pi /2\right) $%
\begin{equation}
\frac{3\cos x}{1+2\cos x}<\frac{\sin x}{x}<\frac{3}{4-\cos x}.  \label{Wu2}
\end{equation}%
The first inequality in (\ref{Wu2}) is actually equivalent to Huygen's one:%
\begin{equation*}
2\frac{\sin x}{x}+\frac{\tan x}{x}>3.
\end{equation*}%
Jiang \cite{Jiang.CM.23.4.2007} showed that for $x\in \left( 0,\pi /2\right) 
$%
\begin{equation*}
\frac{\sin x}{x}>\frac{1+2\cos x}{2+\cos x}.
\end{equation*}%
Li and He gave an improvement of (\ref{Wu2}) as follows:%
\begin{equation}
\frac{7+5\cos x}{11+\cos x}<\frac{\sin x}{x}<\frac{9+6\cos x}{14+\cos x}.
\label{Li-He}
\end{equation}

The main purpose of this paper is to give sharp bounds for $\left( \sin
x\right) /x$ in terms of the functions $H_{1}\left( \cos t,p\right) $ and $%
H_{2}\left( \cos t,p\right) $, where%
\begin{eqnarray}
H_{1}\left( x,p\right) &=&\frac{2p+\left( p+3\right) x}{3p+1+2x}\text{, }%
x\in \left( 0,1\right) ,\ p\in (-\infty ,-1]\cup \lbrack 0,\infty ),
\label{H_1} \\
H_{2}\left( x,p\right) &=&\frac{3p+1}{\pi p}\frac{2p+\left( p+3\right) x}{%
\left( 3p+1\right) +2x}\text{, }x\in \left( 0,1\right) ,\ p\in (-\infty
,-1]\cup (0,\infty ).  \label{H_2}
\end{eqnarray}%
The rest of this paper is organized as follows. For later use, some lemmas
are given in Section 2. Main results and their proofs are proven in the
third section, in which Theorem \ref{Theorem Ma} unify and generalize Jordan
and Cusa's inequalities, Theorem \ref{Theorem Mc} shows that Adamovi\'{c}%
-Mitrinovi\'{c}and and Cusa's inequalities (\ref{M-C}) can be interpolated
by $H_{1}\left( \cos x,p\right) $ for suitable $p$, Theorem \ref{Theorem Mg}
gives a hyperbolic version of Theorem \ref{Theorem Ma}. In Section 4, as
applications of main results, some new Shafer-Fink type inequalities for arc
sine function and ones for certain bivariate means are established, and a
simpler but more accurate estimate for sine integral is derived.

\section{Lemmas}

\begin{lemma}
\label{Lemma H1-2}Let $H_{1}$ and $H_{2}$ be defined by (\ref{H_1}) and (\ref%
{H_2}) respectively. Then $H_{1}$ and $H_{2}$ are increasing and decreasing
in $p$ on $(-\infty ,-1]\cup (0,\infty )$, respectively, with the limits 
\begin{eqnarray}
\lim_{p\rightarrow -\infty }H_{1}\left( x,p\right) &=&\lim_{p\rightarrow
\infty }H_{1}\left( x,p\right) =\frac{2+x}{3},  \label{Limit 1} \\
\lim_{p\rightarrow -\infty }H_{2}\left( x,p\right) &=&\lim_{p\rightarrow
\infty }H_{2}\left( x,p\right) =\frac{2+x}{\pi }  \label{Limit 2}
\end{eqnarray}
\end{lemma}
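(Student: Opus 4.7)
The plan is to verify monotonicity by direct differentiation in $p$, factoring the numerator of each derivative so that the sign is manifest on each connected component of the domain, and then to read off the limits by dividing numerator and denominator by $p$. For $H_{1}$ the quotient rule gives $\partial H_{1}/\partial p = N(x,p)/(3p+1+2x)^{2}$ with
\[
N(x,p) = (2+x)(3p+1+2x) - 3\bigl(2p+(p+3)x\bigr),
\]
and a short expansion should collapse this to $2(1-x)^{2}>0$ for $x\in(0,1)$. Since $3p+1+2x$ vanishes only at $p=-(1+2x)/3\in(-1,0)$, which sits in the gap of the domain, the derivative is well-defined and strictly positive on each of $(-\infty,-1]$ and $[0,\infty)$, giving the claimed monotonicity of $H_{1}$.

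For $H_{2}$ I would factor $H_{2} = \tfrac{3p+1}{\pi p}\,H_{1}$ and change variable to $s=1/p$, which carries $(-\infty,-1]\cup(0,\infty)$ bijectively onto $[-1,0)\cup(0,\infty)$ and is decreasing on each branch. In the $s$ variable,
\[
H_{2} = \frac{(3+s)\bigl((2+x)+3xs\bigr)}{\pi\bigl(3+(1+2x)s\bigr)},
\]
and I expect the numerator of $\partial H_{2}/\partial s$ to be a positive multiple of the quadratic $q(s):=(1+2x)s^{2}+6s+(5-2x)$. Its discriminant is $16(1-x)^{2}$, and $s=-1$ is one root (hinted at by the fact that $H_{2}(x,-1)=2/\pi$ is independent of $x$); Vieta then forces the other root to be $(2x-5)/(1+2x)$, which lies in $[-5,-1)$ for $x\in(0,1)$. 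Consequently $q(s)\ge 0$ on the admissible range $s\ge -1$, so $H_{2}$ increases in $s$ and therefore decreases in $p$ on each branch.

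The limits are immediate from the rescalings $H_{1}=\bigl((2+x)+3x/p\bigr)/\bigl(3+(1+2x)/p\bigr)$ and $H_{2} = \tfrac{3+1/p}{\pi}H_{1}$ on sending $p\to\pm\infty$. The main obstacle I anticipate is the sign analysis for $H_{2}$: unlike $H_{1}$, whose derivative numerator collapses to a perfect square, the quadratic $q(s)$ is genuinely $x$-dependent, and the argument depends on locating both of its roots and checking that neither lies in the admissible region $s>-1$. Once the hidden root $s=-1$ is spotted, the remaining root is pinned down by Vieta and the sign analysis reduces to a one-line discriminant computation.
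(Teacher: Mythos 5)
Your proof is correct. The $H_{1}$ part is identical to the paper's: the quotient-rule numerator collapses to $2(1-x)^{2}$, and the observation that the pole $p=-(1+2x)/3$ falls in the gap $(-1,0)$ of the domain is a nice touch the paper leaves implicit. For $H_{2}$ the paper differentiates directly in $p$ and obtains $\partial H_{2}/\partial p=-\tfrac{3x}{\pi p^{2}(3p+2x+1)^{2}}(p+1)\bigl((5-2x)p+(2x+1)\bigr)$, then checks the sign in two cases: trivially for $p>0$, and for $p\le -1$ via the bound $(5-2x)p+(2x+1)<4(x-1)<0$. Your reciprocal substitution $s=1/p$ produces exactly the same factorization seen from the other end: your $q(s)=(1+2x)s^{2}+6s+(5-2x)$ equals $p^{-2}$ times the paper's factor $(p+1)\bigl((5-2x)p+(2x+1)\bigr)$ under $s=1/p$, and the positive multiple you anticipated is indeed $3x/\pi$ over the squared denominator, so the sign conclusion goes through. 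Your Vieta/discriminant argument (roots at $s=-1$ and $s=(2x-5)/(1+2x)\in(-5,-1)$, hence $q\ge 0$ for $s\ge -1$) is a clean equivalent of the paper's two-case bound; what it buys is a conceptual explanation of \emph{why} the sign works out, namely that $p=-1$ is forced to be a root because $H_{2}(x,-1)=2/\pi$ is independent of $x$. Two small caveats, neither fatal: the substitution requires $p\neq 0$, which is harmless since $H_{2}$ is only defined for $p\in(-\infty,-1]\cup(0,\infty)$; and the claimed monotonicity ``on the union'' must be read branchwise (as both you and the paper implicitly do), the two branches being joined through the common limit $(2+x)/\pi$ as $p\to\pm\infty$.
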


\begin{proof}
Partial derivative calculations yield%
\begin{eqnarray*}
\frac{\partial H_{1}}{\partial p} &=&\frac{2\left( x-1\right) ^{2}}{\left(
3p+1+2x\right) ^{2}}>0, \\
\frac{\partial H_{2}}{\partial p} &=&-\frac{3x}{\pi p^{2}\left(
3p+2x+1\right) ^{2}}\left( p+1\right) \left( \left( 5-2x\right) p+\left(
2x+1\right) \right) .
\end{eqnarray*}%
If $p\in \left( 0,\infty \right) $, then it is cleat that $\partial
H_{2}/\partial p<0$. If $p\in $ $\left( -\infty ,-1\right) $, then $\left(
5-2x\right) p+\left( 2x+1\right) <4\left( x-1\right) <0$, and then $\partial
H_{2}/\partial p<0$.

Simple computation gives (\ref{Limit 1}) and (\ref{Limit 2}), which proves
the lemma.
\end{proof}

\begin{lemma}
\label{Lemma u}Let the function $u_{1},u_{2}$ be defined on $\left(
0,1\right) \times (-\infty ,-1]\cup \lbrack 0,\infty )$ by%
\begin{eqnarray}
u_{1}\left( x,p\right) &=&\left( 2p+\left( 3+p\right) x\right) \left(
3p+1+2x\right) ,  \label{u1} \\
u_{2}\left( x,p\right) &=&2\left( p+3\right) x^{3}+8px^{2}+2p\left(
3p+1\right) x+3\left( p+1\right) ^{2},  \label{u2}
\end{eqnarray}%
respectively. Then $u_{1}\left( x,p\right) ,u_{2}\left( x,p\right) >0$.
\end{lemma}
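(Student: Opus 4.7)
The plan is to dispatch both inequalities by splitting according to the two connected components of the admissible range of $p$, namely $p\in[0,\infty)$ and $p\in(-\infty,-1]$, and in each case reducing positivity to an inspection of signs or to a manifestly nonnegative rewriting.

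For $p\in[0,\infty)$ and $x\in(0,1)$, both assertions are immediate. In $u_1$, each of the two factors $2p+(3+p)x$ and $3p+1+2x$ is a sum of nonnegative terms containing a strictly positive summand, so $u_1>0$. In $u_2$, each of the four summands $2(p+3)x^3$, $8px^2$, $2p(3p+1)x$, $3(p+1)^2$ is nonnegative (using $p+3>0$, $p\ge 0$, and $3p+1>0$) while $2(p+3)x^3>0$, so $u_2>0$.

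For $p\in(-\infty,-1]$ and $x\in(0,1)$, I would first handle $u_1$ by showing that both of its factors are strictly negative. The linear map $x\mapsto 2p+(3+p)x$ takes the values $2p\le-2$ at $x=0$ and $3(p+1)\le 0$ at $x=1$; being linear and having both endpoint values $\le 0$ with at least one strictly negative, it stays strictly negative on $(0,1)$, regardless of the sign of $3+p$. The second factor satisfies $3p+1+2x\le-2+2x<0$. Hence $u_1$ is a product of two negatives, so positive.

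The main obstacle is $u_2$ on $p\le -1$, where the coefficients of $u_2(x,p)$ change signs and no termwise estimate works. My plan is to substitute $p=-(1+s)$ with $s\ge 0$ and expand, anticipating the identity
\[
u_2(x,-1-s)\;=\;4x(1-x)^2\,+\,2sx(x+5)(1-x)\,+\,3s^2(2x+1).
\]
Once this rearrangement is verified by direct expansion (matching the constant, linear, and quadratic coefficients in $s$ separately), each of the three summands on the right is nonnegative for $x\in(0,1)$ and $s\ge 0$, and the first, $4x(1-x)^2$, is strictly positive there; thus $u_2>0$, completing the lemma.
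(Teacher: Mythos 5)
Your proof is correct, and the decisive step --- positivity of $u_{2}$ for $p\le -1$ --- is handled by a genuinely different device than the paper's. For $u_{1}$ you and the paper do essentially the same thing (both factors are negative when $p\le -1$; the paper bounds each factor by $2(x-1)<0$, you evaluate the linear factor at the endpoints $x=0,1$ --- either works). For $u_{2}$ the paper computes $\partial u_{2}/\partial p=(12x+6)p+(2x^{3}+8x^{2}+2x+6)\le 2x(x+5)(x-1)<0$ for $p\le-1$, concludes that $u_{2}$ is decreasing in $p$ there, and finishes with $u_{2}(x,p)>u_{2}(x,-1)=4x(x-1)^{2}>0$. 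You instead substitute $p=-1-s$, $s\ge 0$, and exhibit the exact expansion
\begin{equation*}
u_{2}\left( x,-1-s\right) =4x\left( 1-x\right) ^{2}+2sx\left( x+5\right)
\left( 1-x\right) +3s^{2}\left( 2x+1\right) ,
\end{equation*}
which I have checked term by term and which is precisely the second-order Taylor expansion of $u_{2}$ in $p$ about $p=-1$ (so the paper's derivative bound is your linear coefficient in disguise). Your route is purely algebraic and makes the positivity manifest in one display with no calculus; the paper's route is shorter to state but relies on a monotonicity argument and a sign estimate for the derivative. Both are complete; no gap.
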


\begin{proof}
It is not difficult to see that $u_{1}\left( x,p\right) ,u_{2}\left(
x,p\right) >0$ for $p\in \lbrack 0,\infty )$. If $p\in (-\infty ,-1]$, then%
\begin{equation*}
2p+\left( 3+p\right) x<2\left( x-1\right) <0\text{ \ and \ }\left(
3p+1+2x\right) <2\left( x-1\right) <0,
\end{equation*}%
and then $u_{1}\left( x,p\right) >0$. It remains to prove that $u_{2}\left(
x,p\right) >0$ for $p\in (-\infty ,-1]$. Differentiation leads to 
\begin{equation*}
\frac{\partial u_{2}}{\partial p}=\left( 12x+6\right) p+\left(
2x^{3}+8x^{2}+2x+6\right) .
\end{equation*}%
Hence, $\partial u_{2}/\partial p<-\left( 12x+6\right) +\left(
2x^{3}+8x^{2}+2x+6\right) =2x\left( x+5\right) \left( x-1\right) <0$, which
implies that $u_{2}$ is decreasing in $p$ on $\left( -\infty ,-1\right) $,
and therefore,%
\begin{equation*}
u_{2}\left( x,p\right) >u_{2}\left( x,-1\right) =4x\left( x-1\right) ^{2}>0.
\end{equation*}%
This completes the proof.
\end{proof}

\begin{lemma}
\label{Lemma u3}Let the function $u_{3}$ be defined on $\left( 0,1\right)
\times (-\infty ,-1]\cup \lbrack 0,\infty )$ by%
\begin{equation}
u_{3}\left( x,p\right) =\left( p+3\right) ^{2}x^{2}+\left( p+3\right) \left(
7p+3\right) x+\left( -3p^{3}+13p^{2}+21p+9\right) .  \label{u3}
\end{equation}%
Then

(i) $u_{3}\left( x,p\right) \geq 0$ for all $x\in \left( 0,1\right) $ if and
only if $p\in (-\infty ,p_{3}]$, where $p_{3}\approx 5.663$ is the unique
root of the equation $u_{3}\left( 0,p\right) =-3p^{3}+13p^{2}+21p+9=0$;

(ii) $u_{3}\left( x,p\right) \leq 0$ if and only if $p\in \lbrack 9,\infty )$%
;

(iii) For every $p\in \left( p_{3},9\right) $, there is a unique $x_{1}\in
\left( 0,1\right) $ such that $u_{3}\left( x,p\right) <0$ for $x\in \left(
0,x_{1}\right) $ and $u_{3}\left( x,p\right) >0$ for $x\in \left(
x_{1},1\right) $.
\end{lemma}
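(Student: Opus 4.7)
The plan is to fix $p$ and treat $u_{3}(x,p)$ as a quadratic in $x$ with non-negative leading coefficient $(p+3)^{2}$, so that its sign on $(0,1)$ is controlled by the vertex together with the two endpoint values. First I would record
\begin{equation*}
u_{3}(0,p)=-3p^{3}+13p^{2}+21p+9=:g(p),\qquad u_{3}(1,p)=3(9-p)(p+1)^{2},
\end{equation*}
the second identity obtained by spotting $p=9$ as a root of $-3p^{3}+21p^{2}+51p+27$ and dividing out $(p-9)$. Next I would locate the vertex $x^{\ast}=-(7p+3)/(2(p+3))$ and check case-by-case on the signs of $p+3$ and $7p+3$ that $x^{\ast}\notin(0,1)$ for every admissible $p$: one finds $x^{\ast}\leq 0$ for $p\in(-\infty,-3)\cup[0,\infty)$ and $x^{\ast}\geq 1$ for $p\in(-3,-1]$, while $p=-3$ degenerates to the constant $u_{3}(\cdot,-3)\equiv 144$. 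Hence $u_{3}(\cdot,p)$ is strictly monotone on $(0,1)$---increasing in the first regime, decreasing in the second---so its sign there is determined by the two endpoint values alone.

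The next step is a study of the cubic $g$. Differentiating gives $g'(p)=-9p^{2}+26p+21$, whose two real roots $p_{-}<0<p_{+}$ satisfy $g'>0$ strictly between them, so $g$ attains a local minimum at $p_{-}$ and a local maximum at $p_{+}$. A direct numerical check shows $g(p_{-}),g(p_{+})>0$, whence $g$ is strictly positive on $(-\infty,p_{+}]$. On $[p_{+},\infty)$, $g$ decreases from a positive value to $-\infty$ and therefore has a unique real root $p_{3}$, localised in $(5,6)$ by the sign check $g(5)=64>0>-45=g(6)$ (numerically $p_{3}\approx 5.663$). Consequently $g(p)\geq 0$ if and only if $p\leq p_{3}$; in particular $g(p)>0$ for every $p\leq -1$.

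Assembling these ingredients gives the three claims. For $p\in(-\infty,-1]$, monotonicity combined with $g(p)>0$ and $u_{3}(1,p)=3(9-p)(p+1)^{2}\geq 0$ forces $u_{3}\geq 0$ on $(0,1)$, in accordance with (i). For $p\in[0,\infty)$ strict monotonicity reduces everything to the endpoint signs: (i) $u_{3}\geq 0$ on $(0,1)$ iff $g(p)\geq 0$ iff $p\leq p_{3}$; (ii) $u_{3}\leq 0$ on $(0,1)$ iff $u_{3}(1,p)\leq 0$ iff $p\geq 9$; and (iii) for $p\in(p_{3},9)$ the values $g(p)<0<u_{3}(1,p)$ straddle zero, so the intermediate value theorem together with strict monotonicity yields the unique $x_{1}\in(0,1)$ with the asserted sign pattern. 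The main obstacle is the cubic analysis: uniqueness of $p_{3}$ depends on verifying that both local extremal values of $g$ are strictly positive, for otherwise $g$ could have additional real roots and the clean single-threshold description of (i) would collapse; the factorisation $u_{3}(1,p)=3(9-p)(p+1)^{2}$ is the only other non-routine ingredient, but once spotted it is immediate.
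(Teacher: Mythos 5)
Your proof is correct and follows essentially the same route as the paper: both treat $u_{3}$ as an upward quadratic in $x$, compute the endpoint values $u_{3}(0,p)$ and $u_{3}(1,p)=-3(p-9)(p+1)^{2}$, locate the vertex $-(7p+3)/(2(p+3))$, and pin down $p_{3}$ by the identical analysis of the cubic $-3p^{3}+13p^{2}+21p+9$ through its critical points $(13\pm\sqrt{358})/9$. Your only departure is a small streamlining: by observing that the vertex never lies in $(0,1)$ for admissible $p$ you reduce everything to endpoint signs, whereas the paper runs a three-case analysis that also covers the interior-vertex case (which is vacuous on $(-\infty,-1]\cup[0,\infty)$).
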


\begin{proof}
In order to prove the desired results, we need to write $u_{3}\left(
x,p\right) $ as%
\begin{equation*}
u_{3}\left( x,p\right) =\left( \left( p+3\right) x+\tfrac{7p+3}{2}\right)
^{2}-3\left( p-\tfrac{9}{4}\right) \left( p+1\right) ^{2}.
\end{equation*}%
And, we have 
\begin{eqnarray*}
u_{3}\left( 0,p\right) &=&-3p^{3}+13p^{2}+21p+9, \\
u_{3}\left( 1,p\right) &=&-3\left( p+1\right) ^{2}\left( p-9\right) .
\end{eqnarray*}

We claim that there is unique $p_{3}\in \left( 5,6\right) $ such that $%
u_{3}\left( 0,p\right) >0$ for $p\in \left( -\infty ,p_{3}\right) $ and $%
u_{3}\left( 0,p\right) <0$ for $p\in \left( p_{3},\infty \right) $.\ Indeed,
we have 
\begin{equation*}
u_{3}^{\prime }\left( 0,p\right) =-9p^{2}+26p+21=-9\left( p-\tfrac{13-\sqrt{%
358}}{9}\right) \left( p-\tfrac{13+\sqrt{358}}{9}\right) ,
\end{equation*}%
which implies that $u_{3}\left( 0,p\right) $ is increasing on $\left( \tfrac{%
13-\sqrt{358}}{9},\tfrac{13+\sqrt{358}}{9}\right) $ and decreasing on $%
\left( -\infty ,\tfrac{13-\sqrt{358}}{9}\right) \cup \left( \tfrac{13+\sqrt{%
358}}{9},\infty \right) $. Since 
\begin{eqnarray*}
u_{3}\left( 0,\tfrac{13-\sqrt{358}}{9}\right) &=&\frac{13952}{243}-\frac{716%
}{243}\sqrt{358}\approx 1.665>0, \\
u_{3}\left( 0,\tfrac{13+\sqrt{358}}{9}\right) &=&\frac{716}{243}\sqrt{358}+%
\frac{13952}{243}>0, \\
u_{3}\left( 0,\infty \right) &=&-\infty ,
\end{eqnarray*}%
there is unique $p_{3}\in \left( \tfrac{13+\sqrt{358}}{9},\infty \right) $
such that $u_{3}\left( 0,p_{3}\right) =0$ and $u_{3}\left( 0,p\right) >0$
for $p\in \left( -\infty ,p_{3}\right) $ and $u_{3}\left( 0,p\right) <0$ for 
$p\in \left( p_{3},\infty \right) $. An easy calculation reveals that $%
p_{3}\approx 5.663$.

(i) Now we prove the necessary and sufficient condition for $u_{3}\left(
x,p\right) \geq 0$ to hold for all $x\in \left( 0,1\right) $. Since $%
u_{3}\left( x,-3\right) =144>0$, we assume that $p\neq -3$. Denote the
minimum point of $u_{3}\left( x,p\right) $ by $x_{0}$. Then $x_{0}=-\left(
7p+3\right) /\left( 2\left( p+3\right) \right) $. And then, due to $\partial
^{2}u_{3}/\partial x^{2}>0$, $u_{3}\left( x,p\right) \geq 0$ for all $x\in
\left( 0,1\right) $ if and only if at least one of the following cases occur:

Case 1: $x_{0}=-\tfrac{7p+3}{2\left( p+3\right) }\geq 1,u_{3}\left(
1,p\right) \geq 0$. It is derived that $p\in \lbrack -1,-3)$.

case 2: $x_{0}=-\tfrac{7p+3}{2\left( p+3\right) }\leq 0,u_{3}\left(
0,p\right) \geq 0$. It implies that $p\in \left( -\infty ,-3,\right) \cup
\lbrack -3/7,p_{3}]$.

case 3: $x_{0}=-\tfrac{7p+3}{2\left( p+3\right) }\in \left( 0,1\right)
,u_{3}\left( x_{0},p\right) =-3\left( p-\tfrac{9}{4}\right) \left(
p+1\right) ^{2}\geq 0$. It yields $p\in \left( -1,-3/7\right) $.

To sum up, $u_{3}\left( x,p\right) \geq 0$ for all $x\in \left( 0,1\right) $
if and only if $p\in (-\infty ,p_{3}]$.

(ii) It is clear that $u_{3}\left( x,p\right) \leq 0$ if and only if $%
u_{3}\left( 0,p\right) \leq 0$ and $u_{3}\left( 1,p\right) \leq 0$. Solving
the inequalities for $p$ leads to $p\geq 9$.

(iii) In the case when $p\in \left( p_{1},9\right) $, it is seen that $%
u_{3}\left( 0,p\right) <0$, $u_{3}\left( 1,p\right) >0$, $x_{0}=-\left(
7p+3\right) /\left( 2\left( p+3\right) \right) <0$. This indicates that
there is a unique $x_{1}\in \left( 0,1\right) $ such that $u_{3}\left(
x,p\right) <0$ for $x\in \left( 0,x_{1}\right) $ and $u_{3}\left( x,p\right)
>0$ for $x\in \left( x_{1},1\right) $.

This completes the proof.
\end{proof}

Now let us consider the sign of function $g$ defined on $\left( 0,\pi
/2\right) \times (-\infty ,-1]\cup \lbrack 0,\infty )$ by 
\begin{eqnarray}
g\left( t,p\right) &=&t-\frac{\left( 2p+\left( p+3\right) \cos t\right)
\left( 3p+1+2\cos t\right) }{2\left( p+3\right) \cos ^{3}t+8p\cos
^{2}t+2p\left( 3p+1\right) \cos t+3\left( p+1\right) ^{2}}\sin t  \notag \\
&=&t-\frac{u_{1}\left( \cos t,p\right) }{u_{2}\left( \cos t,p\right) }\sin t,
\label{g}
\end{eqnarray}%
where $u_{1}\left( x,p\right) $ and $u_{2}\left( x,p\right) $ are defined by
(\ref{u1}) and (\ref{u2}), respectively. We have

\begin{lemma}
\label{Lemma sgng}Let $g$ defined on $(-\infty ,-1]\cup \lbrack 0,\infty
)\times \left( 0,\pi /2\right) $ by (\ref{g}). Then

(i) $g\left( t,p\right) <0$ for all $t\in \left( 0,\pi /2\right) $ if and
only if $p\in (-\infty ,-1]\cup \left[ 9,\infty \right) $;

(ii) $g\left( t,p\right) >0$ for all $t\in \left( 0,\pi /2\right) $ if and
only if $p\in \lbrack 0,p_{1}]$, where 
\begin{equation}
p_{1}=\frac{2\sqrt{6\pi +1}+3\pi -2}{12-3\pi }\approx 6.343;  \label{p1}
\end{equation}

(iii) in the case when $p\in (p_{1},9)$, there is a unique $t_{0}\in \left(
0,\pi /2\right) $ such that $g\left( t,p\right) >0$ for $t\in \left(
0,t_{0}\right) $ and $g\left( t,p\right) <0$ for $t\in \left( t_{0},\pi
/2\right) $.
\end{lemma}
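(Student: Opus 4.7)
The plan is to analyze $g(t,p)$ through its boundary values $g(0,p)$ and $g(\pi/2,p)$ together with the sign of $\partial g/\partial t$; I expect the latter to be controlled by $u_{3}(\cos t,p)$ via an explicit factorisation, which is how Lemma \ref{Lemma u3} enters.

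First, I record the boundary values. Since $u_{1}(1,p)=u_{2}(1,p)=9(p+1)^{2}$, one gets $g(0,p)=0$ for every admissible $p$. At $t=\pi /2$, using $\cos (\pi /2)=0$,
$$g\bigl(\tfrac{\pi }{2},p\bigr)=\frac{\pi }{2}-\frac{u_{1}(0,p)}{u_{2}(0,p)}=\frac{\pi }{2}-\frac{2p(3p+1)}{3(p+1)^{2}},$$
and $g(\pi /2,p)=0$ reduces to the quadratic $(12-3\pi )p^{2}+(4-6\pi )p-3\pi =0$, one of whose roots is exactly the $p_{1}$ of (\ref{p1}) while the other lies in $(-1,0)$, outside the admissible range. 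A direct sign check then yields $g(\pi /2,p)>0$ on $[0,p_{1})$, and $g(\pi /2,p)<0$ on $(p_{1},\infty )$ and on $(-\infty ,-1]$ (with the limit $-\infty $ reached as $p\to -1$).

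Next I turn to the derivative. Writing $x=\cos t$ and using $\sin ^{2}t=1-x^{2}$, a direct computation of $\partial g/\partial t$ should produce an identity of the form
$$\frac{\partial g}{\partial t}(t,p)\;=\;\frac{\sin ^{2}t}{u_{2}(\cos t,p)^{2}}\,u_{3}(\cos t,p)\,W(\cos t,p),$$
where $W$ is a rational factor whose sign is positive for $p\in [0,\infty )$ and negative for $p\in (-\infty ,-1]$. Establishing this factorisation is the main obstacle: it amounts to a polynomial identity in $x$ with coefficients polynomial in $p$, and the definition of $u_{3}$ in (\ref{u3}) is engineered precisely so the identity holds. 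Once it is verified, the sign of $\partial g/\partial t$ on $(0,\pi /2)$ is governed entirely by $u_{3}(\cos t,p)\cdot W$.

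Given the factorisation, the three parts follow by combining Lemma \ref{Lemma u3} with the boundary data. For $p\in [0,p_{3}]$, $u_{3}\geq 0$ and $W>0$, so $g$ is non-decreasing from $g(0,p)=0$, giving $g>0$; for $p\in (p_{3},p_{1}]$, Lemma \ref{Lemma u3}(iii) says $u_{3}$ has exactly one sign change on $(0,1)$, so $g$ first increases then decreases, and because $g(\pi /2,p)\geq 0$ it cannot hit $0$ on $(0,\pi /2)$; together these cases give (ii). For $p\in (p_{1},9)$ the shape of $g$ is the same but $g(\pi /2,p)<0$, so the intermediate value theorem yields exactly one interior zero $t_{0}$, giving (iii). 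For $p\geq 9$, $u_{3}\leq 0$ while $W>0$, so $g$ strictly decreases from $0$, hence $g<0$; for $p\leq -1$, $u_{3}\geq 0$ but $W<0$, again forcing $g$ to decrease and giving $g<0$, which completes (i). The necessity of the stated $p$-ranges in (i) and (ii) is immediate from the boundary value $g(\pi /2,p)$ together with the Taylor behaviour of $g$ near $t=0^{+}$.
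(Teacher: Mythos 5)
Your proposal is correct and follows essentially the same route as the paper: boundary data $g(0^+,p)=0$ and $g(\pi/2^-,p)$ (your quadratic and its roots agree with the paper's $p_1$, and your placement of the other root in $(-1,0)$ is right), plus a factorisation of $\partial g/\partial t$ through $u_3$ and the case analysis from Lemma \ref{Lemma u3}. The one step you leave unexecuted is exactly what the paper computes in (\ref{dg})--(\ref{h}): the cofactor is $W=4\left( 1-\cos t\right) \left( \cos t+\tfrac{3p+1}{2}\right)$, which has precisely the signs you predicted, so your outline closes without further modification.
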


\begin{proof}
We first give two limit relations as follows:%
\begin{eqnarray}
\lim_{t\rightarrow 0^{+}}\frac{g\left( t,p\right) }{t^{5}} &=&-\frac{1}{45}%
\frac{p-9}{p+1}\text{ if }p\neq -1,  \label{g1} \\
g\left( \tfrac{\pi }{2}^{-},p\right) &=&-\tfrac{12-3\pi }{6\left( p+1\right)
^{2}}\left( p-p_{1}\right) \left( p-p_{2}\right) \text{ if }p\neq -1,
\label{g2}
\end{eqnarray}%
where 
\begin{equation*}
p_{1}=\frac{2\sqrt{6\pi +1}+3\pi -2}{12-3\pi }\approx 6.343\text{ \ and \ }%
p_{2}=\frac{2\sqrt{6\pi +1}-3\pi +2}{12-3\pi }<0.
\end{equation*}%
In fact, for $p\neq -1$, expanding $g\left( t,p\right) $ in power series
gives 
\begin{equation*}
g\left( t,p\right) =-\frac{1}{45}\frac{p-9}{p+1}t^{5}+O\left( t^{7}\right) ,
\end{equation*}%
which implies the first relation. Direct computations yields the second one.

Differentiating $g\left( t,p\right) $ for $t$ leads to 
\begin{eqnarray}
\frac{\partial g}{\partial t} &=&1-\frac{u_{1}\left( \cos t,p\right) }{%
u_{2}\left( \cos t,p\right) }\cos t+\left( \sin ^{2}t\right) \frac{d}{dx}%
\frac{u_{1}\left( x,p\right) }{u_{2}\left( x,p\right) }\bigg|_{x=\cos t} 
\notag \\
&=&\frac{4\left( 1-x\right) \left( 1-x^{2}\right) }{u_{2}^{2}\left(
x,p\right) }\times h\left( x,p\right) ,  \label{dg}
\end{eqnarray}%
where 
\begin{equation}
h\left( x,p\right) =\left( x+\frac{3p+1}{2}\right) \times u_{3}\left(
x,p\right) ,  \label{h}
\end{equation}%
$u_{3}\left( x,p\right) $ is defined by (\ref{u3}) and $x=\cos t\in \left(
0,1\right) $.

(i) We now prove that $g\left( t,p\right) \leq 0$ for all $t\in \left( 0,\pi
/2\right) $ if and only if $p\in (-\infty ,-1]\cup \left[ 9,\infty \right) $%
. The necessity easily follows from the inequalities $\lim_{t\rightarrow
0^{+}}t^{-5}g\left( t,p\right) \leq 0$ and $g\left( \pi /2^{-},p\right) \leq
0$ if $p\neq -1$ and $g\left( t,-1\right) =t-\tan t<0$ together with the
relations (\ref{g1}) and (\ref{g2}).

Next we proves the sufficiency. If $p\in \left[ 9,\infty \right) $, then by
Lemma \ref{Lemma u3} $u_{3}\left( x,p\right) \leq 0$, and then $h\left(
x,p\right) \leq 0$. This indicates that $g$ is decreasing in $t$ on $\left(
0,\pi /2\right) $, and therefore, we get $g\left( t,p\right) <g\left(
0^{+},p\right) =0$. If $p\in (-\infty ,-1]$, then $u_{3}\left( x,p\right)
\geq 0$ and $x+\left( 3p+1\right) /2<x-1<0$, which yields $h\left(
x,p\right) \leq 0$. This also yields that $g$ is decreasing in $t$ on $%
\left( 0,\pi /2\right) $, and so $g\left( t,p\right) <0$.

(ii) Similarly, we can prove that $g\left( t,p\right) >0$ for all $t\in
\left( 0,\pi /2\right) $ if and only if $p\in \lbrack 0,p_{1}]$. If $g\left(
t,p\right) >0$ for all $t\in \left( 0,\pi /2\right) $, then we have $%
\lim_{t\rightarrow 0^{+}}t^{-5}g\left( t,p\right) \geq 0$ and $g\left( \pi
/2^{-},p\right) \geq 0$, which together with (\ref{g1}) and (\ref{g2}) and $%
p\in (-\infty ,-1]\cup \lbrack 0,\infty )$ lead to $p\in \lbrack 0,p_{1}]$.
In order to prove the sufficiency, we distinguish two cases:

In the case of $p\in \lbrack 0,p_{3}]$, by Lemma \ref{Lemma u3} we have $%
u_{3}\left( x,p\right) \geq 0$, which implies that $g$ is increasing in $t$
on $\left( 0,\pi /2\right) $, and so, $g\left( t,p\right) >g\left(
0^{+},p\right) =0$.

In the case of $p\in (p_{3},p_{1}]$, from Lemma \ref{Lemma u3} there is a
unique $x_{1}\in \left( 0,1\right) $ such that $u_{3}\left( x,p\right) <0$
for $x\in \left( 0,x_{1}\right) $ and $u_{3}\left( x,p\right) >0$ for $x\in
\left( x_{1},1\right) $. This in conjunction with (\ref{h}) and (\ref{dg})
shows that $g$ is decreasing in $t$ on $\left( \arccos x_{1},\pi /2\right) $
and increasing on $\left( 0,\arccos x_{1}\right) $, and consequently, we have%
\begin{eqnarray*}
g\left( t,p\right) &>&g\left( 0^{+},p\right) =0\text{ for }t\in \left(
0,\arccos x_{1}\right) , \\
g\left( t,p\right) &>&g\left( \tfrac{\pi }{2}^{+},p\right) =-\tfrac{12-3\pi 
}{6\left( p+1\right) ^{2}}\left( p-p_{1}\right) \left( p-p_{2}\right) \geq 0%
\text{ for }t\in \left( \arccos x_{1},\pi /2\right) ,
\end{eqnarray*}%
which proves the sufficiency,

(iii) In the case when $p\in (p_{1},9)$, we have seen that $g$ is decreasing
in $t$ on $\left( \arccos x_{1},\pi /2\right) $ and increasing on $\left(
0,\arccos x_{1}\right) $ and $g\left( t,p\right) >0$ for $t\in \left(
0,\arccos x_{1}\right) $ but 
\begin{equation*}
g\left( \tfrac{\pi }{2}^{-},p\right) =-\tfrac{12-3\pi }{6\left( p+1\right)
^{2}}\left( p-p_{1}\right) \left( p-p_{2}\right) <0.
\end{equation*}%
Thus, there is a unique $t_{0}\in \left( \arccos x_{1},\pi /2\right) $ such
that $g\left( t,p\right) >0$ for $t\in \left( 0,t_{0}\right) $ and $g\left(
t,p\right) <0$ for $t\in \left( t_{0},\pi /2\right) $.

The whole proof is complete.
\end{proof}

We next observe the function $f$ defined on $(-\infty ,-1]\cup \lbrack
0,\infty )\times \left( 0,\pi /2\right) $ by 
\begin{equation}
f\left( t,p\right) =\ln \frac{\sin t}{t}-\ln H_{1}\left( \cos t,p\right)
=\ln \frac{\sin t}{t}-\ln \frac{2p+\left( p+3\right) \cos t}{\left(
3p+1\right) +2\cos t}.  \label{f}
\end{equation}%
Differentiation yields that%
\begin{eqnarray}
\frac{\partial f}{\partial t} &=&\frac{\cos t}{\sin t}-\frac{1}{t}-\frac{%
2\sin t}{3p+1+2\cos t}+\frac{\left( p+3\right) \sin t}{2p+\left( p+3\right)
\cos t}  \notag \\
&=&\tfrac{2\left( p+3\right) \cos ^{3}t+\left( 3p^{2}+14p+3\right) \cos
^{2}t+3\left( p+1\right) ^{2}\left( \sin ^{2}t\right) +2p\left( 3p+1\right)
\cos t}{\left( \sin t\right) \left( 2p+3\cos t+p\cos t\right) \left(
3p+2\cos t+1\right) }-\frac{1}{t}  \label{df} \\
&=&\frac{1}{\sin t}\frac{u_{2}\left( \cos t,p\right) }{u_{1}\left( \cos
t,p\right) }-\frac{1}{t}=\frac{1}{t\sin t}\frac{u_{2}\left( \cos t,p\right) 
}{u_{1}\left( \cos t,p\right) }\times g\left( t,p\right) ,  \notag
\end{eqnarray}%
where $g\left( t,p\right) $ is defined by (\ref{g}). From Lemmas \ref{Lemma
u} and \ref{Lemma sgng} the following assertion is immediate.

\begin{lemma}
\label{Lemma sgndf}Let $f$ be the function defined on $\left( 0,\pi
/2\right) \times (-\infty ,-1]\cup \lbrack 0,\infty )$ by (\ref{f}). Then

(i) $f$ is decreasing in $t$ on $\left( 0,\pi /2\right) $ if and only if $%
p\in (-\infty ,-1]\cup \left[ 9,\infty \right) $;

(ii) $f$ is increasing in $t$ on $\left( 0,\pi /2\right) $ if and only if $%
p\in \lbrack 0,p_{1}]$, where $p_{1}\approx 6.343$ is given by (\ref{p1});

(iii) in the case when $p\in (p_{1},9)$, there is a unique $t_{0}\in \left(
0,\pi /2\right) $ such that $f$ is increasing in $t$ on $\left(
0,t_{0}\right) $ and decreasing on $\left( t_{0},\pi /2\right) $.$%
\allowbreak $
\end{lemma}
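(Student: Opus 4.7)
The plan is to observe that essentially all the work has already been done in the derivative computation displayed just before the lemma statement, and the conclusion reduces to reading off the sign of $g(t,p)$ via Lemma~\ref{Lemma sgng}. The key factorization
\[
\frac{\partial f}{\partial t}=\frac{1}{t\sin t}\,\frac{u_{2}(\cos t,p)}{u_{1}(\cos t,p)}\,g(t,p)
\]
is exactly the bridge: it expresses the monotonicity of $f$ in $t$ in terms of the sign of $g$, up to a strictly positive prefactor.

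First I would verify that the prefactor $\frac{1}{t\sin t}\cdot\frac{u_{2}(\cos t,p)}{u_{1}(\cos t,p)}$ is strictly positive on $(0,\pi/2)\times\bigl((-\infty,-1]\cup[0,\infty)\bigr)$. On $(0,\pi/2)$ one has $t>0$ and $\sin t>0$, and by Lemma~\ref{Lemma u} both $u_{1}(\cos t,p)$ and $u_{2}(\cos t,p)$ are strictly positive for $\cos t\in(0,1)$ and $p\in(-\infty,-1]\cup[0,\infty)$. Hence $\operatorname{sgn}(\partial f/\partial t)=\operatorname{sgn}(g(t,p))$ pointwise.

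Second, I would apply Lemma~\ref{Lemma sgng} directly. For (i), $f$ is decreasing in $t$ on $(0,\pi/2)$ iff $g(t,p)\le 0$ throughout, which by Lemma~\ref{Lemma sgng}(i) occurs iff $p\in(-\infty,-1]\cup[9,\infty)$. For (ii), $f$ is increasing iff $g(t,p)\ge 0$ throughout, which by Lemma~\ref{Lemma sgng}(ii) occurs iff $p\in[0,p_{1}]$. For (iii), given $p\in(p_{1},9)$, Lemma~\ref{Lemma sgng}(iii) supplies a unique $t_{0}\in(0,\pi/2)$ with $g>0$ on $(0,t_{0})$ and $g<0$ on $(t_{0},\pi/2)$; the sign correspondence then yields that $f$ is increasing on $(0,t_{0})$ and decreasing on $(t_{0},\pi/2)$.

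There is no real obstacle here: the statement is a formal consequence of the derivative identity plus Lemmas~\ref{Lemma u} and~\ref{Lemma sgng}. The only mild point to be careful about is the \emph{only if} direction in (i) and (ii): if $f$ is monotone in $t$, then $g(t,p)$ has constant sign on $(0,\pi/2)$, and the characterizations in Lemma~\ref{Lemma sgng}(i)--(ii) pin down $p$ exactly. Once this is spelled out, the proof is essentially a one-line deduction, so I would keep the write-up short and reference the earlier displayed computation of $\partial f/\partial t$ rather than redoing it.
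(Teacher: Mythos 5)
Your proposal is correct and follows exactly the paper's route: the paper derives the same factorization of $\partial f/\partial t$ just before the lemma and then declares the assertion ``immediate'' from Lemmas~\ref{Lemma u} and~\ref{Lemma sgng}. You have simply spelled out the positivity of the prefactor and the sign correspondence that the paper leaves implicit.
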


Lastly, for later use, we also give the following

\begin{lemma}
\label{Lemma H_1-x}Let $H_{1}$ be defined on $\left( 0,1\right) \times
(-\infty ,-1]\cup \lbrack 0,\infty )$ by (\ref{H_1}). Then $H_{1}\left(
x^{3},p\right) \geq x$ if and only if $p\in (-\infty ,-1]\cup \lbrack
1,\infty )$, and $H_{1}\left( x^{3},p\right) \leq x$ if and only if $p=0$.
\end{lemma}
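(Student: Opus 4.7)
The plan is to reduce the claim to a sign analysis of a rational function whose numerator factors nicely. I would start by writing
\[
H_1(x^3,p) - x = \frac{2p + (p+3)x^3 - x(3p+1+2x^3)}{3p+1+2x^3} =: \frac{N(x,p)}{D(x,p)},
\]
where $N(x,p) = -2x^4 + (p+3)x^3 - (3p+1)x + 2p$ and $D(x,p) = 3p+1+2x^3$. A direct check shows $N(1,p)=0$ and $(\partial_x N)(1,p)=0$, so $(x-1)^2$ divides $N$; polynomial division then yields
\[
N(x,p) = (x-1)^2\,R(x,p), \qquad R(x,p) = -2x^2 + (p-1)x + 2p.
\]

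The denominator $D$ is easy to dispose of on $(0,1)$: it is positive whenever $p \geq 0$, and satisfies $D \leq -2+2x^3 < 0$ whenever $p \leq -1$. Since $(x-1)^2 \geq 0$ with strict inequality on $(0,1)$, the sign of $H_1(x^3,p) - x$ on $(0,1)$ is determined entirely by the signs of $R(x,p)$ and $D(x,p)$. Now $R$ is a concave (downward) parabola in $x$ with boundary values $R(0,p) = 2p$ and $R(1,p) = 3(p-1)$, and its vertex sits at $x_\ast = (p-1)/4$.

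I would then split into four cases. For $p \geq 1$, both boundary values of $R$ are nonnegative, so concavity gives $R(x,p) \geq \min\{R(0,p), R(1,p)\} \geq 0$ on $[0,1]$; combined with $D > 0$, this yields $H_1(x^3,p) \geq x$. For $p = 0$, $R(x,0) = -x(2x+1) < 0$ while $D > 0$, giving $H_1(x^3,0) \leq x$ (with strict inequality on $(0,1)$). For $0 < p < 1$, $R(0,p) > 0 > R(1,p)$, so $R$ changes sign on $(0,1)$ and neither global inequality can hold. For $p \leq -1$, $x_\ast \leq -1/2$, so $R$ is decreasing on $[0,1]$ with $R(x,p) \leq R(0,p) = 2p < 0$; together with $D < 0$ this again gives $H_1(x^3,p) \geq x$. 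Assembling the four cases produces both equivalences.

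The only step with any real content will be spotting the double root of $N$ at $x = 1$ and extracting the factorisation $N = (x-1)^2 R$; once $R$ is in hand, everything reduces to tracking the sign of a concave quadratic at two endpoints and one vertex, which is entirely routine.
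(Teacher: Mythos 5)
Your proposal is correct and follows essentially the same route as the paper: the paper also extracts the factor $(x-1)^{2}$ and reduces everything to the sign of the quadratic $u_{4}(x,p)=2x^{2}+(1-p)x-2p$ (your $R=-u_{4}$) together with the sign of the denominator $3p+1+2x^{3}$, handled by the same vertex/endpoint case analysis. No substantive differences to report.
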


\begin{proof}
For $p\in \left( -\infty ,\infty \right) $, we define%
\begin{equation*}
u_{4}\left( x,p\right) =2x^{2}+\left( 1-p\right) x-2p=2\left( x-\tfrac{p-1}{4%
}\right) ^{2}-\tfrac{1}{7}\left( 14p+p^{2}+1\right) .
\end{equation*}%
Then $u_{4}\left( x,p\right) \geq 0$ holds for all $x\in \left( 0,1\right) $
if and only if $p\in (-\infty ,0]$.

In fact, $u_{4}\left( x,p\right) \geq 0$ if and only if at least one case of
the following occur:

Case 1: $\frac{p-1}{4}\geq 1$, $u_{4}\left( 1,p\right) =3-3p\geq 0$. It is
impossible.

Case 2: $\frac{p-1}{4}\leq 0$, $u_{4}\left( 0,p\right) =-2p\geq 0$. It
indicates $p\in (-\infty ,0]$.

Case 3: $0<\frac{p-1}{4}<1$, $u_{4}\left( \frac{p-1}{4},p\right) \geq 0$. It
is impossible.

In the same way, we can prove that $u_{4}\left( x,p\right) \leq 0$ holds for
all $x\in \left( 0,1\right) $ if and only if $p\in \lbrack 1,\infty )$.

We now prove that $H_{1}\left( x^{3},p\right) \geq x$ if and only if $p\in
(-\infty ,-1]\cup \lbrack 1,\infty )$. Factoring yields 
\begin{equation*}
H_{1}\left( x^{3},p\right) -x=-2\left( x-1\right) ^{2}\frac{2x^{2}+\left(
1-p\right) x-2p}{3p+2x^{3}+1}=-\left( x-1\right) ^{2}\frac{u_{4}\left(
x,p\right) }{3p+2x^{3}+1}.
\end{equation*}%
If $p\in (-\infty ,-1]$, then $3p+2x^{3}+1<0$, and then, $H_{1}\left(
x^{3},p\right) \geq x$ if and only if $u_{4}\left( x,p\right) \geq 0$, which
is equivalent to $p\in (-\infty ,-1]\cap (-\infty ,0]=(-\infty ,-1]$. If $%
p\in \lbrack 0,\infty )$, then $3p+2x^{3}+1>0$, and then, $H_{1}\left(
x^{3},p\right) \geq x$ if and only if $u_{4}\left( x,p\right) \leq 0$, which
is equivalent to $p\in \lbrack 0,\infty )\cap \lbrack 1,\infty )=[1,\infty )$%
. Consequently, $H_{1}\left( x^{3},p\right) \geq x$ if and only if $p\in
(-\infty ,-1]\cup \lbrack 1,\infty )$.

Next we show that $H_{1}\left( x^{3},p\right) \leq x$ if and only if $p=0$.
In fact, if $p\in (-\infty ,-1]$, then $H_{1}\left( x^{3},p\right) \leq x$
if and only if $u_{4}\left( x,p\right) \leq 0$, which yields $p\in \lbrack
1,\infty )$. It is clearly a contradiction. If $p\in \lbrack 0,\infty )$,
then the statement in question if and only if $u_{4}\left( x,p\right) \geq 0$%
, which leads us to $p\in \lbrack 0,\infty )\cap (-\infty ,0]=\{0\}$. Thus
the proof is complete.
\end{proof}

\section{Main results}

\begin{theorem}
\label{Theorem Ma}Let $p\in (-\infty ,-1]\cup \lbrack 0,\infty )$. Then for $%
t\in \left( 0,\pi /2\right) $%
\begin{equation}
\frac{\sin t}{t}<\frac{2p+\left( p+3\right) \cos t}{\left( 3p+1\right)
+2\cos t}  \label{Main a}
\end{equation}%
holds if and only if $p\in (-\infty ,-1]\cup \left[ 9,\infty \right) $.
Moreover, we have 
\begin{equation}
H_{2}\left( \cos t,p\right) =\lambda _{p}\frac{2p+\left( p+3\right) \cos t}{%
\left( 3p+1\right) +2\cos t}<\frac{\sin t}{t}<\frac{2p+\left( p+3\right)
\cos t}{\left( 3p+1\right) +2\cos t}=H_{1}\left( \cos t,p\right) 
\label{Main a1}
\end{equation}%
for $p\in (-\infty ,-1]\cup \left[ 9,\infty \right) $, where $\lambda
_{p}=\left( 3p+1\right) /\left( \pi p\right) $ is the best possible. And,
the lower and upper bounds in (\ref{Main a1}) are decreasing and increasing
in $p$ on $(-\infty ,-1]\cup \left( 0,\infty \right) $, respectively.
\end{theorem}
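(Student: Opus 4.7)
The plan is to reformulate both (\ref{Main a}) and (\ref{Main a1}) in terms of the sign of the auxiliary function $f(t,p)$ from (\ref{f}) and then read off everything from Lemma \ref{Lemma sgndf}, supplemented by Lemma \ref{Lemma H1-2} for the monotonicity-in-$p$ claim. The first step is to record the two boundary values of $f(\cdot,p)$ on $(0,\pi/2)$. Since $H_{1}(1,p)=1$, we get $f(0^{+},p)=0$; and since $H_{1}(0,p)=2p/(3p+1)$, we get
\[
f(\pi/2^{-},p)=\ln\tfrac{2}{\pi}-\ln\tfrac{2p}{3p+1}=\ln\lambda_{p},\qquad \lambda_{p}=\tfrac{3p+1}{\pi p}.
\]
These two endpoint values are the pivots of the entire argument.

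For the sufficiency direction of (\ref{Main a}) and for the two-sided bound (\ref{Main a1}), I would invoke Lemma \ref{Lemma sgndf}(i): for $p\in(-\infty,-1]\cup[9,\infty)$, $f(\cdot,p)$ is strictly decreasing on $(0,\pi/2)$, so $\ln\lambda_{p}=f(\pi/2^{-},p)<f(t,p)<f(0^{+},p)=0$, which on exponentiation is precisely $H_{2}(\cos t,p)<\sin t/t<H_{1}(\cos t,p)$. Along the way one should note that for $p\in(-\infty,-1]$ both $\lambda_{p}>0$ and $H_{1}(\cos t,p)>0$ (numerator and denominator share the same sign), so no inequalities flip when the logarithms are removed.

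For the converse direction, I would argue by contradiction using the remaining parts of Lemma \ref{Lemma sgndf}. If $p\in[0,p_{1}]$, part (ii) says $f(\cdot,p)$ is increasing, so $f(t,p)>f(0^{+},p)=0$ on $(0,\pi/2)$ and (\ref{Main a}) fails everywhere. If $p\in(p_{1},9)$, part (iii) says $f(\cdot,p)$ is first increasing from $f(0^{+},p)=0$ and then decreasing, so $f(t,p)>0$ for $t$ in a right-neighborhood of $0$ and (\ref{Main a}) again fails. These two cases exhaust the complement of $(-\infty,-1]\cup[9,\infty)$ inside the admissible parameter range.

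For the sharpness of $\lambda_{p}$, rewrite $\sin t/t>\lambda\,H_{1}(\cos t,p)$ as $f(t,p)>\ln\lambda$; since $f(\cdot,p)$ is strictly decreasing with infimum $f(\pi/2^{-},p)=\ln\lambda_{p}$, any $\lambda>\lambda_{p}$ is contradicted by choosing $t$ close enough to $\pi/2$, so $\lambda_{p}$ is best possible. The closing monotonicity statement follows immediately from Lemma \ref{Lemma H1-2}. The main obstacle is not analytic—the heavy work has already been done in Lemmas \ref{Lemma sgndf} and \ref{Lemma H1-2}—but bookkeeping: one must handle the branch $p\le -1$ simultaneously with $p\ge 9$ throughout, in particular checking signs so that the logarithmic reformulation and the subsequent exponentiation both preserve the direction of the inequalities.
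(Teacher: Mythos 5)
Your proposal is correct and follows essentially the same route as the paper: reduce everything to the sign of $f(t,p)=\ln\frac{\sin t}{t}-\ln H_1(\cos t,p)$, get sufficiency and the two-sided bound \eqref{Main a1} with the sharp constant $\lambda_p$ from the monotonicity of $f$ in $t$ (Lemma \ref{Lemma sgndf}(i)) together with $f(0^+,p)=0$ and $f(\pi/2^-,p)=\ln\lambda_p$, and obtain the monotonicity in $p$ from Lemma \ref{Lemma H1-2}. The only (harmless) deviation is in the necessity step, where the paper solves the simultaneous endpoint inequalities coming from the expansion $f(t,p)=-\tfrac{1}{180}\tfrac{p-9}{p+1}t^4+O(t^6)$ and $f(\pi/2^-,p)\le 0$, whereas you rule out $p\in[0,9)$ by invoking parts (ii)--(iii) of Lemma \ref{Lemma sgndf} to show $f>0$ near $0$; both are valid.
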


\begin{proof}
Clearly, the desired result is equivalent to $f\left( t,p\right) <0$ if and
only if $p\in (-\infty ,-1]\cup \left[ 9,\infty \right) $. To this end, we
give two limit relations. The first one follows by expanding $f\left(
t,p\right) $ in power series for $t$. We have 
\begin{equation*}
f\left( t,p\right) =-\frac{1}{180}\frac{p-9}{p+1}t^{4}+O\left( t^{6}\right) 
\text{ if }p\neq -1,
\end{equation*}%
which yields%
\begin{equation}
\lim_{t\rightarrow 0^{+}}\frac{f\left( t,p\right) }{t^{4}}=-\frac{1}{180}%
\frac{p-9}{p+1}\text{ if }p\neq -1.  \label{Limit f1}
\end{equation}%
The second one is derived by a simple computation, that is,%
\begin{equation}
f\left( \tfrac{\pi }{2}^{-},p\right) =\ln \frac{3p+1}{\pi p}.
\label{Limit f2}
\end{equation}%
Now we prove that $f\left( t,p\right) <0$ for all $t\in \left( 0,\pi
/2\right) $ if and only if $p\in (-\infty ,-1]\cup \left[ 9,\infty \right) $.

The necessity easily follows by solving the simultaneous inequalities%
\begin{eqnarray*}
\lim_{t\rightarrow 0^{+}}\frac{f\left( t,p\right) }{t^{4}} &=&-\frac{1}{180}%
\frac{p-9}{p+1}\leq 0\ \text{if }p\neq -1\text{ and }f\left( t,-1\right)
=\ln \frac{\sin t}{t}<0, \\
f\left( \tfrac{\pi }{2}^{-},p\right) &=&\ln \frac{3p+1}{\pi p}\leq 0,
\end{eqnarray*}%
which implies $p\in (-\infty ,-1]\cup \left[ 9,\infty \right) $.

The sufficiency is due to Lemma \ref{Lemma sgndf}. In fact, If $p\in
(-\infty ,-1]\cup \left[ 9,\infty \right) $, then by Lemma \ref{Lemma sgndf}
we see that $f$ is decreasing in $t$ on $\left( 0,\pi /2\right) $. Hence, $%
f\left( t,p\right) <f\left( 0^{+},p\right) =0$.

Utilizing the monotonicity of $f$ in $t$ on $\left( 0,\pi /2\right) $ gives (%
\ref{Main a1}). And, from Lemma \ref{Lemma H1-2} it is seen that the lower
and upper bounds in (\ref{Main a1}) are decreasing and increasing in $p$ on $%
(-\infty ,-1]\cup \left( 0,\infty \right) $, respectively.

Thus the proof is finished.
\end{proof}

By Theorem \ref{Theorem Ma} and Lemma \ref{Lemma H1-2}, we have the
following interesting chain of inequalities.

\begin{corollary}
\label{Corollary Ma}For $t\in \left( 0,\pi /2\right) $, we have 
\begin{eqnarray*}
\frac{2}{\pi } &=&H_{2}\left( \cos t,-1\right) <...<H_{2}\left( \cos
t,-\infty \right) =\frac{2+\cos t}{\pi }=H_{2}\left( \cos t,\infty \right)
<... \\
&<&H_{2}\left( \cos t,9\right) <\frac{\sin t}{t}<H_{1}\left( \cos t,9\right)
<...<H_{1}\left( \cos t,\infty \right) =\frac{2+\cos t}{3} \\
&=&H_{1}\left( \cos t,-\infty \right) <...H_{1}\left( \cos t,-1\right) =1.
\end{eqnarray*}
\end{corollary}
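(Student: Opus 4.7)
The corollary is a bookkeeping consolidation of the two ingredients already in hand: Theorem \ref{Theorem Ma} supplies the central sandwich $H_{2}(\cos t,9)<\sin t/t<H_{1}(\cos t,9)$ (the specialization to $p=9$, which is admissible since $9\in[9,\infty)$), and everything to the left and right of these two middle bounds must come from the monotonicity and limit statements of Lemma \ref{Lemma H1-2}. So the plan is simply to put the pieces together while keeping track of signs.

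First I would verify the two explicit endpoint values at $p=-1$. Observe that the factor $2p+(p+3)\cos t=2(\cos t-1)$ and the denominator $3p+1+2\cos t=2(\cos t-1)$ coincide there, so $H_{1}(\cos t,-1)=1$; similarly the prefactor $(3p+1)/(\pi p)$ evaluates to $2/\pi$, giving $H_{2}(\cos t,-1)=2/\pi$. These pin down the extreme ends of the chain.

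Next I would invoke Lemma \ref{Lemma H1-2}. On the branch $(-\infty,-1]$, $H_{1}$ is increasing in $p$ and $H_{2}$ is decreasing in $p$; hence, as $p$ moves from $-1$ down towards $-\infty$, the values $H_{1}(\cos t,p)$ strictly decrease from $1$ to the limit $(2+\cos t)/3$, while $H_{2}(\cos t,p)$ strictly increase from $2/\pi$ to the limit $(2+\cos t)/\pi$. On the branch $(0,\infty)$, $H_{1}$ is still increasing and $H_{2}$ still decreasing, so as $p$ moves from $9$ up to $\infty$, $H_{1}(\cos t,p)$ increases from $H_{1}(\cos t,9)$ to the same limit $(2+\cos t)/3$, and $H_{2}(\cos t,p)$ decreases from $H_{2}(\cos t,9)$ to $(2+\cos t)/\pi$. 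The two branches are glued at $p=\pm\infty$ precisely because Lemma \ref{Lemma H1-2} records the equalities $\lim_{p\to-\infty}H_{i}(\cos t,p)=\lim_{p\to\infty}H_{i}(\cos t,p)$ for $i=1,2$.

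No step presents a genuine obstacle; the only caution is reconciling inequality directions as $p$ traverses the two branches in opposite monotonic senses. Once that is checked, juxtaposing the two monotone ladders with the $p=9$ sandwich from Theorem \ref{Theorem Ma} yields the displayed chain, and strictness of each inequality follows from the strict signs of $\partial H_{1}/\partial p$ and $\partial H_{2}/\partial p$ derived in the proof of Lemma \ref{Lemma H1-2}.
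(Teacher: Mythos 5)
Your proposal is correct and follows exactly the route the paper intends: the paper states the corollary as an immediate consequence of Theorem \ref{Theorem Ma} (the $p=9$ sandwich) and Lemma \ref{Lemma H1-2} (monotonicity in $p$ plus the matching limits at $p=\pm\infty$), which is precisely the assembly you carry out, including the endpoint evaluations $H_{1}(\cos t,-1)=1$ and $H_{2}(\cos t,-1)=2/\pi$.
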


\begin{remark}
It is clear that our results unify and refine Jordan and Cusa's
inequalities, and show that the first one in (\ref{Li-He}) is sharp. Also,
Theorem \ref{Theorem Ma} contains other known results, for example, taking $%
p=-3$ in \ref{Main a1} we get 
\begin{equation}
\frac{8}{\pi }\frac{1}{4-\cos t}<\frac{\sin t}{t}<3\frac{1}{4-\cos t},
\label{Wu-Yang}
\end{equation}%
which contain (\ref{Wu2}). After a simple transformation, (\ref{Wu-Yang})
can be written as 
\begin{equation}
\frac{8}{\pi }\frac{t}{\sin t}+\cos t<4<3\frac{t}{\sin t}+\cos t,
\label{Neuman}
\end{equation}%
where the second inequality in (\ref{Neuman}) is due to Neuman and S\'{a}%
ndor \cite[(2.12)]{Neuman.MIA.13.4.2010}.
\end{remark}

\begin{theorem}
\label{Theorem Mb}Let $p\in (-\infty ,-1]\cup \left( 0,\infty \right) $.
Then for $t\in \left( 0,\pi /2\right) $%
\begin{equation}
\frac{2p+\left( p+3\right) \cos t}{\left( 3p+1\right) +2\cos t}<\frac{\sin t%
}{t}  \label{Main b}
\end{equation}%
holds if and only if $p\in \lbrack 0,p_{0}]$, where $p_{0}=\left( \pi
-3\right) ^{-1}\approx 7.063$.

Moreover, for $p\in \lbrack 0,p_{1}]$ we have%
\begin{equation}
H_{1}\left( \cos t,p\right) =\frac{2p+\left( p+3\right) \cos t}{\left(
3p+1\right) +2\cos t}<\frac{\sin t}{t}<\lambda _{p}\frac{2p+\left(
p+3\right) \cos t}{\left( 3p+1\right) +2\cos t}=H_{2}\left( \cos t,p\right) ,
\label{Main b1}
\end{equation}%
where $p_{1}\approx 6.343$, $\lambda _{p}=\left( 3p+1\right) /\left( \pi
p\right) $ is the best possible. And, $H_{1}\left( \cos t,p\right) $, $%
H_{2}\left( \cos t,p\right) $ are decreasing and increasing in $p$ on $%
(-\infty ,-1]\cup \left( 0,\infty \right) $, respectively.

For $p\in (p_{1},p_{0}]$ we have%
\begin{equation}
H_{1}\left( \cos t,p\right) =\frac{2p+\left( p+3\right) \cos t}{\left(
3p+1\right) +2\cos t}<\frac{\sin t}{t}<\delta _{p}\frac{2p+\left( p+3\right)
\cos t}{\left( 3p+1\right) +2\cos t}=\delta _{p}H_{1}\left( \cos t,p\right) ,
\label{Main b2}
\end{equation}%
where $\delta _{p}=\frac{\sin t_{0}}{t_{0}}\frac{\left( 3p+1\right) +2\cos
t_{0}}{2p+\left( p+3\right) \cos t_{0}}$ is the best possible and $t_{0}$ is
the unique root of the equation%
\begin{equation}
\frac{\left( 2p+\left( 3+p\right) \cos t\right) \left( 3p+1+2\cos t\right) }{%
2\left( p+3\right) \cos ^{3}t+8p\cos ^{2}t+2p\left( 3p+1\right) \cos
t+3\left( p+1\right) ^{2}}\sin t=t  \label{df=0}
\end{equation}%
on $\left( 0,\pi /2\right) $.
\end{theorem}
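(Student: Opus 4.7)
My plan is to reformulate (\ref{Main b}) as the inequality $f(t,p)>0$, with $f$ defined by (\ref{f}), and then to read off both the sharp range of $p$ and the sharp constants from the monotonicity structure of $f$ provided by Lemma \ref{Lemma sgndf}. The two boundary values I will need are exactly the ones computed in the proof of Theorem \ref{Theorem Ma}, namely
\[
\lim_{t\to 0^{+}}\frac{f(t,p)}{t^{4}}=-\frac{1}{180}\frac{p-9}{p+1}\quad (p\neq -1),\qquad f\!\left(\tfrac{\pi}{2}^{-},p\right)=\ln\frac{3p+1}{\pi p}.
\]

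For the ``only if'' direction I impose $f(t,p)>0$ on $(0,\pi/2)$ and pass to both endpoints. The limit at $0^{+}$ forces $-(p-9)/(180(p+1))\ge 0$, which, combined with the standing assumption $p\in(-\infty,-1]\cup(0,\infty)$, singles out $p\in(0,9]$; the limit at $\pi/2^{-}$ forces $(3p+1)/(\pi p)\ge 1$, equivalent to $p(\pi-3)\le 1$, i.e.\ $p\le p_{0}=1/(\pi-3)$. Intersecting these two conditions gives the necessary range $p\in[0,p_{0}]$.

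For the ``if'' direction I split according to Lemma \ref{Lemma sgndf}. When $p\in[0,p_{1}]$, part (ii) shows that $f$ is strictly increasing on $(0,\pi/2)$, so $f(t,p)>f(0^{+},p)=0$ and simultaneously $f(t,p)<f(\pi/2^{-},p)=\ln\lambda_{p}$; exponentiating yields (\ref{Main b1}), and $\lambda_{p}$ is best possible because this upper bound is approached as $t\to\pi/2^{-}$. When $p\in(p_{1},p_{0}]$, part (iii) shows that $f$ is increasing on $(0,t_{0})$ and decreasing on $(t_{0},\pi/2)$; hence $\inf f=\min\{f(0^{+},p),f(\pi/2^{-},p)\}=\min\{0,\ln((3p+1)/(\pi p))\}=0$ (only as a limit), proving $f(t,p)>0$ on the open interval, while $\sup f=f(t_{0},p)=\ln\delta_{p}$ is attained at the interior maximizer $t_{0}$. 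This yields (\ref{Main b2}), and $\delta_{p}$ is sharp since the bound is attained at $t_{0}$; the defining equation (\ref{df=0}) for $t_{0}$ is simply $\partial f/\partial t=0$, which by (\ref{df}) is equivalent to $g(t_{0},p)=0$. The monotonicity of $H_{1}$ and $H_{2}$ in $p$ is already contained in Lemma \ref{Lemma H1-2}, so nothing further is needed there.

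The main obstacle is not computational but organizational: I must recognize that the cut-off $p_{0}$ arises purely from the right-endpoint limit $f(\pi/2^{-},p)$, which in the unimodal regime $(p_{1},9)$ governs the sign of $f$ near $\pi/2$, and then carefully match this cut-off with the upper bounds, so that for $p\in(p_{1},p_{0}]$ the sharp upper bound is the interior value $\delta_{p}H_{1}$ rather than the boundary value $\lambda_{p}H_{1}=H_{2}$ inherited from the monotone regime.
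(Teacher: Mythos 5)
Your proposal is correct and follows essentially the same route as the paper: both reduce (\ref{Main b}) to $f(t,p)>0$, obtain necessity from the two boundary limits $\lim_{t\to 0^{+}}t^{-4}f(t,p)$ and $f(\pi/2^{-},p)$, and obtain sufficiency plus the sharp constants $\lambda_{p}$ and $\delta_{p}$ by splitting into the monotone case $p\in[0,p_{1}]$ and the unimodal case $p\in(p_{1},p_{0}]$ via Lemma \ref{Lemma sgndf}. Your explicit identification of (\ref{df=0}) with $\partial f/\partial t=0$ and your citation of Lemma \ref{Lemma H1-2} for the monotonicity in $p$ are small clarifications the paper leaves implicit, not a different argument.
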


\begin{proof}
Since the inequality (\ref{Main b}) is equivalent to $f\left( t,p\right) >0$%
, it suffices to prove that $f\left( t,p\right) >0$ holds for $t\in \left(
0,\pi /2\right) $ if and only if $p\in \lbrack 0,p_{0}]$.

Similarly, solving the simultaneous inequalities $\lim_{t\rightarrow
0}t^{-4}f\left( t,p\right) \geq 0$ and $f\left( \pi /2^{-},p\right) \geq 0$
and noting $p\in (-\infty ,-1]\cup \left( 0,\infty \right) $ yields $p\in
\lbrack 0,p_{0}]$, which is the necessity.

Conversely, the condition $p\in \lbrack 0,p_{0}]$ is also sufficient for $%
f\left( t,p\right) >0$ to be valid. To this end, we divide the proof into
two cases.

Case 1: $p\in \lbrack 0,p_{1}]$. By Lemma \ref{Lemma sgndf} it is seen that $%
f$ is increasing in $t$ on $\left( 0,\pi /2\right) $, which indicates that $%
f\left( t,p\right) >f\left( 0^{+},p\right) =0$.

Case 2: $p\in (p_{1},p_{0}]$. By Lemma \ref{Lemma sgndf} we see that there
is a unique $t_{0}\in \left( 0,\pi /2\right) $ such that $f$ is increasing
in $t$ on $\left( 0,t_{0}\right) $ and decreasing on $\left( t_{0},\pi
/2\right) $. It is acquired that 
\begin{eqnarray*}
f\left( t_{0},p\right) &>&f\left( t,p\right) >f\left( 0^{+},p\right) =0\text{
for }t\in \left( 0,t_{0}\right) , \\
f\left( t_{0},p\right) &>&f\left( t,p\right) >f\left( \pi /2^{-},p\right)
=\ln \frac{3p+1}{\pi p}\geq 0\text{ for }t\in \left( t_{0},\pi /2\right) ,
\end{eqnarray*}%
that is, 
\begin{equation}
f\left( t_{0},p\right) \geq f\left( t,p\right) >0\text{ for }t\in \left(
0,\pi /2\right) ,  \label{Case 2}
\end{equation}%
which prove the sufficiency.

In the first case, application of the monotonicity of $f$ in $t$ on $\left(
0,\pi /2\right) $ leads to (\ref{Main b1}), and $\lambda _{p}=\left(
3p+1\right) /(\pi p)$. In the second case, (\ref{Case 2}) also yields (\ref%
{Main b1}), and 
\begin{equation*}
\delta _{p}=\exp f\left( t_{0},p\right) =\frac{\sin t_{0}}{t_{0}}\frac{%
\left( 3p+1\right) +2\cos t_{0}}{2p+\left( p+3\right) \cos t_{0}}.
\end{equation*}%
Thus we complete the proof.
\end{proof}

Letting $p=p_{0}=\left( \pi -3\right) ^{-1}$ and solving the equation (\ref%
{df=0}) by mathematical computation software, we find that $t_{0}\approx
1.305$ and $\delta _{p_{0}}\approx 1.0015$. Letting $p=p_{1}$ defined by (%
\ref{p1}) yields $\lambda _{p_{1}}=\left( 3p_{1}+1\right) /\left( \pi
p_{1}\right) \approx 1.0051$. By Theorem \ref{Theorem Mb} we get

\begin{corollary}
For $t\in \left( 0,\pi /2\right) $, we have 
\begin{eqnarray*}
\frac{2p_{0}+\left( p_{0}+3\right) \cos t}{\left( 3p_{0}+1\right) +2\cos t}
&<&\frac{\sin t}{t}<\delta _{p_{0}}\frac{2p_{0}+\left( p_{0}+3\right) \cos t%
}{\left( 3p_{0}+1\right) +2\cos t}, \\
\frac{2p_{1}+\left( p_{1}+3\right) \cos t}{\left( 3p_{1}+1\right) +2\cos t}
&<&\frac{\sin t}{t}<\lambda _{p_{1}}\frac{2p_{1}+\left( p_{1}+3\right) \cos t%
}{\left( 3p_{1}+1\right) +2\cos t},
\end{eqnarray*}%
where $\delta _{p_{0}}\approx 1.0015$ and $\lambda _{p_{1}}\approx 1.0051$
are the best possible constants.
\end{corollary}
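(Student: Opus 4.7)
The plan is to apply Theorem \ref{Theorem Mb} at two specific values of $p$, so the proof reduces almost entirely to numerical evaluation of the sharp constants.

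First, I would compare the distinguished parameters: $p_{0}=1/(\pi-3)\approx 7.063$ while $p_{1}\approx 6.343$ (from (\ref{p1})), so $p_{0}\in(p_{1},p_{0}]$ (trivially, as the right endpoint) and $p_{1}\in[0,p_{1}]$. Thus inequality (\ref{Main b2}) applies at $p=p_{0}$ and inequality (\ref{Main b1}) applies at $p=p_{1}$; written out verbatim, these are exactly the two displayed chains in the corollary. No further analytic work is required to set up the inequalities.

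For the second chain, the best upper constant is $\lambda_{p_{1}}=(3p_{1}+1)/(\pi p_{1})$. I would simply substitute the closed form (\ref{p1}) for $p_{1}$ and simplify to obtain $\lambda_{p_{1}}\approx 1.0051$; this step is routine arithmetic. For the first chain, the best upper constant is $\delta_{p_{0}}=\frac{\sin t_{0}}{t_{0}}\cdot\frac{(3p_{0}+1)+2\cos t_{0}}{2p_{0}+(p_{0}+3)\cos t_{0}}$, where $t_{0}\in(0,\pi/2)$ is the unique root of (\ref{df=0}) at $p=p_{0}$. Uniqueness of $t_{0}$ is guaranteed by Lemma \ref{Lemma sgng}(iii), since $p_{0}\in(p_{1},9)$, and that same lemma also furnishes the sign structure of $g(\cdot,p_{0})$ needed to bracket $t_{0}$ rigorously. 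Because the equation is transcendental with no closed form, I would invoke a numerical root-finder (bisection, using the sign change of $g(\cdot,p_{0})$ at $t_{0}$, is the cleanest) to locate $t_{0}\approx 1.305$ and then plug back in to obtain $\delta_{p_{0}}\approx 1.0015$.

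The main obstacle is therefore only the numerical certification: producing the stated decimal approximations for $t_{0}$ and $\delta_{p_{0}}$ to enough precision to justify the displayed strict inequalities. Thanks to the monotonicity structure of $g(\cdot,p_{0})$ from Lemma \ref{Lemma sgng}(iii) and of $f(\cdot,p_{0})$ from Lemma \ref{Lemma sgndf}(iii), this is a straightforward bracketing exercise rather than a genuine analytic difficulty, and sharpness of both constants is inherited directly from Theorem \ref{Theorem Mb}.
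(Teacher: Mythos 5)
Your proposal is correct and follows essentially the same route as the paper: the author likewise specializes Theorem \ref{Theorem Mb} to $p=p_{0}$ (via (\ref{Main b2})) and $p=p_{1}$ (via (\ref{Main b1})), and obtains $t_{0}\approx 1.305$, $\delta_{p_{0}}\approx 1.0015$ and $\lambda_{p_{1}}\approx 1.0051$ by numerically solving (\ref{df=0}) and evaluating $(3p_{1}+1)/(\pi p_{1})$. Your additional appeal to Lemma \ref{Lemma sgng}(iii) to certify the bracketing of $t_{0}$ is a slightly more careful version of the paper's ``by mathematical computation software,'' but it is not a different argument.
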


Letting $x=\cos ^{1/3}t$ in Lemma \ref{Lemma H_1-x} and using Theorems \ref%
{Theorem Ma} and \ref{Theorem Mb}, we obtain a chain of inequalities
interpolated Adamovi\'{c}-Mitrinovi\'{c}and and Cusa's ones (\ref{M-C}) by $%
H_{1}\left( \cos x,p\right) $.

\begin{theorem}
\label{Theorem Mc}For $t\in \left( 0,\pi /2\right) $, the inequalities 
\begin{eqnarray*}
\frac{2p+\left( p+3\right) \cos t}{\left( 3p+1\right) +2\cos t} &<&\cos
^{1/3}t<\frac{2q+\left( q+3\right) \cos t}{\left( 3q+1\right) +2\cos t}<%
\frac{\sin t}{t} \\
&<&\frac{2r+\left( r+3\right) \cos t}{\left( 3r+1\right) +2\cos t}<\frac{%
2+\cos t}{3}<\frac{2s+\left( s+3\right) \cos t}{\left( 3s+1\right) +2\cos t}
\end{eqnarray*}%
hold if and only if $p=0$, $q\in \lbrack 1,p_{0}]$, $r\in \lbrack 9,\infty )$%
, $s\in (-\infty ,-1]$, where $p_{0}=\left( \pi -3\right) ^{-1}$.
\end{theorem}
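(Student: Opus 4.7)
The plan is to parse the displayed chain as six separate inequalities, each of which has already been characterized as an ``iff'' in an earlier lemma or theorem, and then intersect the corresponding constraints on the parameters.

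For the two inequalities adjacent to $\cos^{1/3}t$, I would substitute $x = \cos^{1/3}t \in (0,1)$ into Lemma \ref{Lemma H_1-x}. This immediately gives $H_1(\cos t, p) < \cos^{1/3}t$ for all $t \in (0, \pi/2)$ iff $p = 0$, and $\cos^{1/3}t < H_1(\cos t, q)$ for all such $t$ iff $q \in (-\infty, -1] \cup [1, \infty)$.

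For the two inequalities adjacent to $\sin t/t$, I would cite Theorem \ref{Theorem Mb} for the lower bound $H_1(\cos t, q) < \sin t/t$ (valid iff $q \in [0, p_0]$) and Theorem \ref{Theorem Ma} for the upper bound $\sin t/t < H_1(\cos t, r)$ (valid iff $r \in (-\infty, -1] \cup [9, \infty)$). Intersecting with the previous constraint coming from Lemma \ref{Lemma H_1-x} narrows the allowable $q$ to $[1, p_0]$.

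For the two inequalities adjacent to $(2+\cos t)/3$, I would use Lemma \ref{Lemma H1-2}: for fixed $x \in (0,1)$, $H_1(x,p)$ is increasing in $p$ on each branch of its domain with common limit $(2+x)/3$ as $p \to \pm \infty$. Combined with the explicit values $H_1(x,-1) = 1 > (2+x)/3$ and $H_1(x,0) = 3x/(1+2x) < (2+x)/3$ (the latter equivalent to the trivial $2(1-x)^2 > 0$), one concludes that $H_1(\cos t, r) < (2+\cos t)/3$ for all $t$ iff $r \in [0, \infty)$, which intersected with the constraint from Theorem \ref{Theorem Ma} yields $r \in [9, \infty)$; likewise $(2+\cos t)/3 < H_1(\cos t, s)$ iff $s \in (-\infty, -1]$. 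The only real obstacle here is bookkeeping: matching each of the six links in the chain with the correct earlier ``iff'' result and composing the resulting parameter sets consistently, while keeping track of strict versus non-strict inequalities (strictness on $(0, \pi/2)$ follows in every case because the cited lemmas achieve equality only in the limit $x \to 1$, i.e.\ $t \to 0^+$, which is excluded).
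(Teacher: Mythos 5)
Your proposal is correct and follows essentially the same route as the paper, which justifies this theorem in a single sentence by substituting $x=\cos^{1/3}t$ into Lemma \ref{Lemma H_1-x} and invoking Theorems \ref{Theorem Ma} and \ref{Theorem Mb}; your handling of the two links adjacent to $(2+\cos t)/3$ via the monotonicity and limits in Lemma \ref{Lemma H1-2} is exactly what the paper's Corollary \ref{Corollary Ma} supplies. The intersections $[1,\infty)\cap[0,p_0]=[1,p_0]$ and $[9,\infty)\cap[0,\infty)=[9,\infty)$, and the strictness observation, are all in order.
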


Using the monotonicity of $f\left( t,p\right) $ in $t$ on $\left( 0,\pi
/4\right) $ given by parts one and two of Lemma \ref{Lemma sgndf}, we see
that%
\begin{equation*}
\ln \left( \tfrac{4}{\pi }\tfrac{3p+\sqrt{2}+1}{\left( 2\sqrt{2}+1\right) p+3%
}\right) =f\left( \frac{\pi }{4},p\right) \leq f\left( \frac{t}{2},p\right)
=\ln \frac{2\sin \frac{t}{2}}{t}-\ln H_{1}\left( \cos \frac{t}{2},p\right)
\leq f\left( 0,p\right) =0
\end{equation*}%
hold for $p\in (-\infty ,-1]\cup \lbrack 9,\infty )$. And then we have 
\begin{equation}
\tfrac{4}{\pi }\tfrac{3p+\sqrt{2}+1}{\left( 2\sqrt{2}+1\right) p+3}%
H_{1}\left( \cos \frac{t}{2},p\right) \cos \frac{t}{2}<\frac{\sin t}{t}%
=H_{1}\left( \cos \frac{t}{2},p\right) \cos \frac{t}{2}.  \label{Main d}
\end{equation}%
It is clear that the right-hand in (\ref{Main d}) is increasing in $p$ on $%
(-\infty ,-1]\cup \lbrack 0,\infty )$, but the monotonicity of left-hand is
to be checked. We define 
\begin{equation*}
H_{3}\left( x,p\right) =\tfrac{4}{\pi }\tfrac{3p+\sqrt{2}+1}{\left( 2\sqrt{2}%
+1\right) p+3}H_{1}\left( x,p\right) ,
\end{equation*}%
where $x=\cos \left( t/2\right) \in \left[ 1/\sqrt{2},1\right] $.
Logarithmic differentiation leads to 
\begin{eqnarray*}
\frac{\partial \ln H_{3}}{\partial p} &=&\tfrac{3}{\left( 3p+\sqrt{2}%
+1\right) }-\tfrac{2\sqrt{2}+1}{\left( p\left( 2\sqrt{2}+1\right) +3\right) }%
-\tfrac{3}{\left( 3p+2x+1\right) }+\tfrac{x+2}{2p+x\left( p+3\right) } \\
&=&-\tfrac{6\left( 2\sqrt{2}+1\right) \left( x-\frac{\sqrt{2}}{2}\right)
\left( \frac{22-9\sqrt{2}}{7}-x\right) }{\left( 3p+\sqrt{2}+1\right) \left(
p\left( 2\sqrt{2}+1\right) +3\right) \left( 3p+2x+1\right) \left( 2p+x\left(
p+3\right) \right) }\left( p+1\right) \left( p-u_{5}\left( x\right) \right) ,
\end{eqnarray*}%
where%
\begin{equation*}
u_{5}\left( x\right) =\tfrac{\left( 5-2\sqrt{2}\right) x-\left( \sqrt{2}%
+2\right) }{\left( 5\sqrt{2}-2\right) -\left( 2\sqrt{2}+1\right) x}.
\end{equation*}%
Since 
\begin{equation*}
u_{5}^{\prime }\left( x\right) =-\frac{12\left( 3-2\sqrt{2}\right) }{\left( 5%
\sqrt{2}-2-\left( 2\sqrt{2}+1\right) x\right) ^{2}}<0,
\end{equation*}%
we have $-1=u_{5}\left( 1\right) <u_{5}\left( x\right) <u_{5}\left( 1/\sqrt{2%
}\right) =-\left( 24\sqrt{2}+5\right) /49\approx -0.795$. Consequently, $%
\partial \left( \ln H_{4}\right) /\partial p<0$ for $p\in (-\infty ,-1]\cup
\lbrack 0,\infty )$.

The result can be stated as a theorem.

\begin{theorem}
\label{Theorem Md}Let $p\in (-\infty ,-1]\cup \lbrack 0,\infty )$. Then for $%
t\in \left( 0,\pi /2\right) $ the inequalities%
\begin{equation}
\sigma _{p}\frac{2p\cos \frac{t}{2}+\left( p+3\right) \cos ^{2}\frac{t}{2}}{%
\left( 3p+1\right) +2\cos \frac{t}{2}}<\frac{\sin t}{t}<\frac{2p\cos \frac{t%
}{2}+\left( p+3\right) \cos ^{2}\frac{t}{2}}{\left( 3p+1\right) +2\cos \frac{%
t}{2}}.  \label{Main D}
\end{equation}%
hold if and only if $p\in (-\infty ,-1]\cup \lbrack 9,\infty )$, where $%
\sigma _{p}=\tfrac{4}{\pi }\tfrac{3p+\sqrt{2}+1}{\left( 2\sqrt{2}+1\right)
p+3}$ is the best constant. And the right-hand and left-hand in (\ref{Main D}%
) are increasing and decreasing, respectively. (\ref{Main D} is reversed if
and only if $p\in \lbrack 0,p_{1}]$, where $p_{1}\approx 6.343$ is defined
by (\ref{p1}).
\end{theorem}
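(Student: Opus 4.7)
The plan is to recognize that (\ref{Main D}) is nothing but the exponentiated form of a chain of inequalities for the auxiliary function $f$ of (\ref{f}) evaluated at the halved argument $t/2$. Using the duplication identity $\sin t = 2\sin(t/2)\cos(t/2)$, so that $(\sin t)/t = (\sin(t/2)/(t/2))\cos(t/2)$, and dividing by the strictly positive quantity $H_{1}(\cos(t/2),p)\cos(t/2)$, one sees that (\ref{Main D}) is equivalent to
\[
\ln\sigma_{p}\;\le\;f\!\left(\tfrac{t}{2},p\right)\;\le\;0\qquad(t\in(0,\pi/2)),
\]
where the value $f(\pi/4,p)=\ln\sigma_{p}$ is obtained by direct substitution into (\ref{f}).

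For sufficiency when $p\in(-\infty,-1]\cup[9,\infty)$, I would invoke Lemma~\ref{Lemma sgndf}(i): $f(\cdot,p)$ is decreasing on the whole interval $(0,\pi/2)$, hence in particular on $(0,\pi/4)$, giving $f(\pi/4,p)\le f(t/2,p)\le f(0^{+},p)=0$. Sharpness of the upper bound follows from the limit $t\to 0^{+}$, and of the lower bound from $t\to(\pi/2)^{-}$ (equivalently $t/2\to(\pi/4)^{-}$). For necessity, assuming (\ref{Main D}) on $(0,\pi/2)$ and passing to the limits $t\to 0^{+}$ and $t\to(\pi/2)^{-}$ via (\ref{Limit f1})--(\ref{Limit f2}) (with the degenerate case $p=-1$ handled separately through the identity $f(t,-1)=\ln((\sin t)/t)<0$) forces the simultaneous inequalities $-(p-9)/(180(p+1))\le 0$ and $\ln((3p+1)/(\pi p))\le 0$, whose solution in $(-\infty,-1]\cup[0,\infty)$ is precisely $(-\infty,-1]\cup[9,\infty)$.

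The monotonicity-in-$p$ claims are essentially already in hand: the upper bound is $H_{1}(\cos(t/2),p)\cos(t/2)$ and is increasing in $p$ by Lemma~\ref{Lemma H1-2}; the lower bound equals $H_{3}(\cos(t/2),p)\cos(t/2)$, and its decreasing character in $p$ follows from the factorization of $\partial\ln H_{3}/\partial p$ via $u_{5}$ carried out in the paragraph immediately preceding the theorem, where the bounds $-1<u_{5}(x)<-(24\sqrt{2}+5)/49$ on $x\in[1/\sqrt{2},1]$ pin down the sign for all admissible $p$. Finally, for the reversal assertion, the same reduction turns the reversed (\ref{Main D}) into $0\le f(t/2,p)\le\ln\sigma_{p}$ on $(0,\pi/4)$; by Lemma~\ref{Lemma sgndf}(ii) this is equivalent to $f(\cdot,p)$ being increasing on $(0,\pi/2)$, which holds iff $p\in[0,p_{1}]$.

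The only real obstacle is the necessity step, where one must extract clean information from the two endpoint limits (\ref{Limit f1}) and (\ref{Limit f2}) while carefully separating the degenerate parameter $p=-1$ (for which the Taylor expansion identity (\ref{Limit f1}) breaks down) and verifying that the intersection with the admissible parameter set $(-\infty,-1]\cup[0,\infty)$ gives exactly $(-\infty,-1]\cup[9,\infty)$. Everything else — the monotonicity of $f$, the computation of $\partial\ln H_{3}/\partial p$, and the evaluation $f(\pi/4,p)=\ln\sigma_{p}$ — is either already in Lemma~\ref{Lemma sgndf} or in the computation preceding the theorem.
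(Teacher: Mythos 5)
Your sufficiency argument and the monotonicity-in-$p$ claims follow the paper's route exactly: the identity $(\sin t)/t=\frac{\sin (t/2)}{t/2}\cos \frac{t}{2}$ turns (\ref{Main D}) into $\ln \sigma _{p}<f(t/2,p)<0$ with $f(\pi /4,p)=\ln \sigma _{p}$, Lemma \ref{Lemma sgndf} supplies the monotonicity of $f$ on $(0,\pi /4)\subset (0,\pi /2)$, and the $u_{5}$-factorization of $\partial \ln H_{3}/\partial p$ settles the lower bound's dependence on $p$; all of that matches the computation preceding the theorem. The problems are in the two necessity steps, which the paper itself silently omits. For the direct inequality: since $t\in (0,\pi /2)$ only lets $s=t/2$ range over $(0,\pi /4)$, the hypothesis gives you no information about $f$ near $\pi /2$, so you may not invoke (\ref{Limit f2}) to force $\ln \frac{3p+1}{\pi p}\leq 0$; the endpoint you actually control is $f(\pi /4^{-},p)=\ln \sigma _{p}$. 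Your conclusion survives only because the $t\rightarrow 0^{+}$ condition $-\frac{1}{180}\frac{p-9}{p+1}\leq 0$ already cuts the admissible parameter set down to $(-\infty ,-1]\cup \lbrack 9,\infty )$ by itself (with $p=-1$ handled via $f(t,-1)=\ln \frac{\sin t}{t}<0$); you should derive the necessity from that limit alone and drop the spurious second condition.

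The reversal is the genuine gap. The condition $0\leq f(s,p)\leq \ln \sigma _{p}$ for $s\in (0,\pi /4)$ is \emph{not} equivalent to $f(\cdot ,p)$ being increasing on $(0,\pi /2)$: Lemma \ref{Lemma sgndf}(ii) characterizes the latter, not the former, so your ``only if'' does not follow. Worse, for $p\in (p_{1},p_{0}]$ part (iii) of that lemma gives a unique maximum point $t_{0}$ of $f(\cdot ,p)$, and for $p=p_{0}$ the paper computes $t_{0}\approx 1.305>\pi /4$; hence $f(\cdot ,p_{0})$ is still increasing on all of $(0,\pi /4)$ and the reversed (\ref{Main D}) holds for this $p\notin \lbrack 0,p_{1}]$. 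So the necessity of $p\in \lbrack 0,p_{1}]$ for the reversal cannot be obtained by your argument (the decisive endpoint $\pi /2$ is simply not reachable by $t/2$), and the ``if and only if'' as stated is in doubt. This defect is shared with the paper, which asserts the theorem immediately after establishing only the sufficiency direction, but your write-up presents the false equivalence as if it closed the gap, and it does not.
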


Putting $p=9,\infty ,0,1$ in Theorem \ref{Theorem Md} we have

\begin{corollary}
For $t\in \left( 0,\pi /2\right) $ the following inequalities hold: 
\begin{eqnarray}
\tfrac{41\left( 2\sqrt{2}-25\right) }{7\pi }\frac{2\cos ^{2}\frac{t}{2}%
+3\cos \frac{t}{2}}{\cos \frac{t}{2}+14} &<&\frac{\sin t}{t}<3\frac{2\cos
^{2}\frac{t}{2}+3\cos \frac{t}{2}}{\cos \frac{t}{2}+14},  \label{Main d1} \\
\tfrac{4\left( 2\sqrt{2}-1\right) }{7}\frac{\cos ^{2}\frac{t}{2}+2\cos \frac{%
t}{2}}{\pi } &<&\frac{\sin t}{t}<\frac{\cos ^{2}\frac{t}{2}+2\cos \frac{t}{2}%
}{3},  \label{Main d2} \\
3\frac{\cos ^{2}\frac{t}{2}}{2\cos \frac{t}{2}+1} &<&\frac{\sin t}{t}<\tfrac{%
4\left( \sqrt{2}+1\right) }{\pi }\frac{\cos ^{2}\frac{t}{2}}{2\cos \frac{t}{2%
}+1},  \label{Main d3} \\
\frac{2\cos ^{2}\frac{t}{2}+1}{\cos \frac{t}{2}+2} &<&\frac{\sin t}{t}<%
\tfrac{2\left( 3-\sqrt{2}\right) }{\pi }\frac{2\cos ^{2}\frac{t}{2}+1}{\cos 
\frac{t}{2}+2}.  \label{Main d4}
\end{eqnarray}
\end{corollary}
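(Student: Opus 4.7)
The plan is to specialize Theorem \ref{Theorem Md} at the four values $p=9$, $p\to\infty$, $p=0$, and $p=1$, and to simplify the resulting rational expressions. Since $9$ and the limit $p\to\infty$ lie in $[9,\infty)$, the direct form of (\ref{Main D}) applies, producing (\ref{Main d1}) and (\ref{Main d2}); since $0,1\in[0,p_{1}]$, the reversed form stated at the end of Theorem \ref{Theorem Md} applies, producing (\ref{Main d3}) and (\ref{Main d4}). For each value of $p$ the two computational tasks are: reduce the rational function $R_{p}(t):=(2p\cos(t/2)+(p+3)\cos^{2}(t/2))/((3p+1)+2\cos(t/2))$ to a form with small integer coefficients by pulling out obvious common factors, and rationalize the constant $\sigma_{p}=4(3p+\sqrt{2}+1)/[\pi((2\sqrt{2}+1)p+3)]$ by multiplying numerator and denominator by a conjugate of the form $a-b\sqrt{2}$.

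For $p=9$, pulling a factor of $2$ from the numerator and denominator of $R_{9}$ gives $3(2\cos^{2}(t/2)+3\cos(t/2))/(\cos(t/2)+14)$, and rationalizing $\sigma_{9}$ by multiplying through by $(3\sqrt{2}-2)/(3\sqrt{2}-2)$ yields the constant in (\ref{Main d1}). For $p=0$, the rational part collapses to $3\cos^{2}(t/2)/(2\cos(t/2)+1)$ and $\sigma_{0}=4(\sqrt{2}+1)/(3\pi)$, from which (\ref{Main d3}) follows immediately. For $p=1$, the rational part collapses to $(\cos(t/2)+2\cos^{2}(t/2))/(\cos(t/2)+2)$ and $\sigma_{1}$ rationalizes to $2(3-\sqrt{2})/\pi$, giving (\ref{Main d4}).

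The delicate case is $p\to\infty$, which cannot be obtained by direct substitution in Theorem \ref{Theorem Md}. I would instead pass to the limit in (\ref{Main D}) along $p\to\infty$ within $[9,\infty)$: by Lemma \ref{Lemma H1-2} the upper bound $R_{p}(t)$ increases strictly in $p$ with limit $(2+\cos(t/2))\cos(t/2)/3$, and the lower bound $\sigma_{p}R_{p}(t)$ has limit $\tfrac{4(2\sqrt{2}-1)}{7\pi}(\cos^{2}(t/2)+2\cos(t/2))$, where $\sigma_{p}\to 12/[\pi(2\sqrt{2}+1)]=12(2\sqrt{2}-1)/(7\pi)$ after rationalization. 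The main obstacle is preserving strict inequality under this limit: I would resolve it by first applying Theorem \ref{Theorem Md} at the finite value $p=9$ to obtain a strict gap $\sigma_{9}R_{9}(t)<(\sin t)/t<R_{9}(t)$, and then using the strict $p$-monotonicity of both bounds to widen this gap to its $p=\infty$ limit without allowing $(\sin t)/t$ to meet either limiting bound, thus yielding (\ref{Main d2}). As a sanity check, the resulting upper bound is exactly Cusa's inequality (\ref{M-C}) evaluated at $t/2$ and multiplied by $\cos(t/2)$.
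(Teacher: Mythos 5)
Your proposal coincides with the paper's proof, which consists of the single line ``Putting $p=9,\infty ,0,1$ in Theorem \ref{Theorem Md} we have''; you merely make explicit the substitutions, the rationalizations of $\sigma _{p}$, and a limiting argument for $p=\infty $ (via the strict $p$-monotonicity of both bounds, starting from the strict inequality at $p=9$) that the paper leaves implicit. Your algebra is correct and in fact exposes two typographical errors in the printed corollary: the constant in (\ref{Main d1}) should read $\tfrac{2\left( 41\sqrt{2}-25\right) }{7\pi }$ rather than $\tfrac{41\left( 2\sqrt{2}-25\right) }{7\pi }$ (which is negative), and the numerator in (\ref{Main d4}) should be $2\cos ^{2}\tfrac{t}{2}+\cos \tfrac{t}{2}$ rather than $2\cos ^{2}\tfrac{t}{2}+1$ (as printed, the left inequality in (\ref{Main d4}) fails near $t=\pi /2$), so your derived formulas, not the printed ones, are the correct specializations.
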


Further, let $H_{5}$ be defined on $\left[ 1/\sqrt{2},1\right] \times
(-\infty ,-1]\cup \lbrack 0,\infty )$ by 
\begin{equation*}
H_{4}\left( x,p\right) =\frac{H_{1}\left( 2x^{2}-1,p\right) }{xH_{1}\left(
x,p\right) },
\end{equation*}%
where $H_{1}$ is defined by (\ref{H_1}). We can show that the monotonicity
of $H_{4}$ in $x$ for certain fixed $p$. Differentiation again yields%
\begin{equation*}
\frac{\partial \ln H_{4}\left( x,p\right) }{\partial x}=\tfrac{2}{1+3p+2x}-%
\frac{1}{x}-\tfrac{p+3}{2p+\left( p+3\right) x}+\tfrac{4\left( p+3\right) x}{%
\left( p-3\right) +2\left( p+3\right) x^{2}}-\tfrac{8x}{4x^{2}+3p-1}.
\end{equation*}%
It is easy to verify that 
\begin{eqnarray*}
\frac{\partial \ln H_{4}\left( x,9\right) }{\partial x} &=&-2\frac{\left(
x-1\right) ^{2}\left( 594x^{2}+240x^{3}+8x^{4}+910x+273\right) }{x\left(
2x+3\right) \left( x+14\right) \left( 2x^{2}+13\right) \left(
4x^{2}+1\right) }<0, \\
\frac{\partial \ln H_{4}\left( x,\infty \right) }{\partial x} &=&-2\frac{%
\left( 1-x\right) \left( 2x+1\right) }{x\left( x+2\right) \left(
2x^{2}+1\right) }<0, \\
\frac{\partial \ln H_{4}\left( x,1\right) }{\partial x} &=&2\frac{\left(
1-x\right) \left( 2x^{3}+8x^{2}+x+1\right) }{x\left( 2x-1\right) \left(
x+2\right) \left( 2x^{2}+1\right) }>0.
\end{eqnarray*}%
Consequently, we have%
\begin{equation*}
1=\frac{H_{1}\left( 1,p\right) }{H_{1}\left( 1,p\right) }<\frac{H_{1}\left(
2x^{2}-1,p\right) }{xH_{1}\left( x,p\right) }<\frac{H_{1}\left( 0,p\right) }{%
\frac{1}{\sqrt{2}}H_{1}\left( \frac{1}{\sqrt{2}},p\right) }=\frac{4p}{3p+1}%
\frac{3p+1+\sqrt{2}}{\left( 2\sqrt{2}+1\right) p+3}\text{ for }p=9,\infty .
\end{equation*}%
It is reversed for $p=1$. From these we can obtain the following.

\begin{theorem}
\label{Theorem Me}For $t\in \left( 0,\pi /2\right) $ the following
inequalities hold: 
\begin{equation}
\tfrac{28}{9\pi }\tfrac{6\cos t+9}{\cos t+14}<\tfrac{41\left( 2\sqrt{2}%
-25\right) }{7\pi }\tfrac{2\cos ^{2}\frac{t}{2}+3\cos \frac{t}{2}}{\cos 
\frac{t}{2}+14}<\tfrac{\sin t}{t}<\tfrac{6\cos ^{2}\frac{t}{2}+9\cos \frac{t%
}{2}}{\cos \frac{t}{2}+14}<\tfrac{6\cos t+9}{\cos t+14},  \label{Me1}
\end{equation}%
\begin{equation}
\tfrac{2+\cos t}{\pi }<\tfrac{12\left( 2\sqrt{2}-1\right) }{7\pi }\tfrac{%
\cos ^{2}\frac{t}{2}+2\cos \frac{t}{2}}{3}<\tfrac{\sin t}{t}<\tfrac{\cos ^{2}%
\frac{t}{2}+2\cos \frac{t}{2}}{3}<\tfrac{2+\cos t}{3},  \label{Me2}
\end{equation}%
\begin{equation}
\tfrac{2\cos t+1}{\cos t+2}<\tfrac{2\cos ^{2}\frac{t}{2}+1}{\cos \frac{t}{2}%
+2}<\tfrac{\sin t}{t}<\tfrac{2\left( 3-\sqrt{2}\right) }{\pi }\tfrac{2\cos
^{2}\frac{t}{2}+1}{\cos \frac{t}{2}+2}<\tfrac{4}{\pi }\tfrac{2\cos t+1}{\cos
t+2}.  \label{Me3}
\end{equation}
\end{theorem}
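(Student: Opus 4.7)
The plan is to assemble the statement from two ingredients already established in the excerpt: Theorem \ref{Theorem Md} supplies the inner pair of inequalities in each of (\ref{Me1})--(\ref{Me3}), while the $x$-monotonicity of the function
$$H_{4}(x,p)=\frac{H_{1}(2x^{2}-1,p)}{xH_{1}(x,p)},\qquad x\in[1/\sqrt{2},1],$$
whose derivative signs for $p=9,\infty,1$ are computed in the paragraph immediately preceding the theorem, supplies the outer comparisons.

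First I would set $x=\cos(t/2)\in(1/\sqrt{2},1)$, so that $2x^{2}-1=\cos t$, and substitute into Theorem \ref{Theorem Md}. For $p=9$ and $p=\infty$ this gives exactly (\ref{Main d1}) and (\ref{Main d2}), which are the middle pairs of (\ref{Me1}) and (\ref{Me2}); and for $p=1\in[0,p_{1}]$ the theorem asserts that (\ref{Main D}) is reversed, which is the middle pair of (\ref{Me3}). Thus only the two outer comparisons in each chain are new content: each has the shape of an inequality between an expression in $\cos t$ and the corresponding half-angle expression in $\cos(t/2)$.

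Each outer comparison, after dividing through by the positive factor $\cos(t/2)H_{1}(\cos(t/2),p)$, becomes a bound on the single quantity $H_{4}(\cos(t/2),p)$. A direct endpoint evaluation gives
$$H_{4}(1,p)=1,\qquad H_{4}\!\left(1/\sqrt{2},p\right)=\frac{4p(3p+1+\sqrt{2})}{(3p+1)((2\sqrt{2}+1)p+3)},$$
and the derivative computations in the excerpt show $H_{4}(\cdot,p)$ is strictly decreasing on $[1/\sqrt{2},1]$ for $p=9,\infty$ and strictly increasing for $p=1$. Hence for $p=9,\infty$ one gets $1<H_{4}(\cos(t/2),p)<H_{4}(1/\sqrt{2},p)$, and for $p=1$ the reverse chain; multiplying back through by $\cos(t/2)H_{1}(\cos(t/2),p)$ yields the two outer comparisons, and joining with the inner pairs from Theorem \ref{Theorem Md} finishes each of (\ref{Me1})--(\ref{Me3}).

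The main obstacle is purely computational bookkeeping rather than analysis: one must check that the closed form of $H_{4}(1/\sqrt{2},p)$, when multiplied by $\sigma_{p}^{\pm 1}$ from Theorem \ref{Theorem Md}, rationalises to precisely the numerical constants ($28/(9\pi)$, $2/\pi$, $4/\pi$, $2(3-\sqrt{2})/\pi$, etc.) displayed in the theorem. No further analytical input beyond Theorem \ref{Theorem Md} and the $H_{4}$-monotonicity already verified in the excerpt is required.
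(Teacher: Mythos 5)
Your proposal is correct and is essentially the paper's own argument: the paper proves Theorem \ref{Theorem Me} by combining Theorem \ref{Theorem Md} (the inner pairs) with the monotonicity of $x\mapsto H_{4}(x,p)=H_{1}(2x^{2}-1,p)/\bigl(xH_{1}(x,p)\bigr)$ on $[1/\sqrt{2},1]$ for $p=9,\infty,1$, together with the endpoint values $H_{4}(1,p)=1$ and $H_{4}(1/\sqrt{2},p)=\sigma_{p}/\lambda_{p}$, exactly as you describe. The only remaining work is the constant bookkeeping you already flag (where, incidentally, the paper's displayed constants contain typographical slips, e.g.\ $\tfrac{41(2\sqrt{2}-25)}{7\pi}$ should read $\tfrac{2(41\sqrt{2}-25)}{7\pi}$).
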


Additionally, Lemma \ref{Lemma sgng} implies an optimal two-side inequality.

\begin{theorem}
\label{Theorem Mf}Let $p\in (-\infty ,-1]\cup \lbrack 0,\infty )$. Then for $%
t\in \left( 0,\pi /2\right) $ the two-side inequality 
\begin{equation}
\frac{u_{2}\left( \cos t,p\right) }{u_{1}\left( \cos t,p\right) }<\frac{\sin
t}{t}<\frac{u_{2}\left( \cos t,q\right) }{u_{1}\left( \cos t,q\right) }
\label{Main f}
\end{equation}%
holds if and only if $p\in (-\infty ,-1]\cup \left[ 9,\infty \right) $ and $%
q\in \lbrack 0,p_{1}]$, where $p_{1}\approx 6.343$. And, for $x\in \left(
0,1\right) $, the function $p\mapsto u_{2}\left( \cos t,p\right)
/u_{1}\left( \cos t,p\right) $ is decreasing on $(-\infty ,-1]\cup \lbrack
0,\infty )$.
\end{theorem}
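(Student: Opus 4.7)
The plan is to recognize that Theorem \ref{Theorem Mf} is essentially a direct reformulation of Lemma \ref{Lemma sgng}. Since $u_1(\cos t, p)>0$ and $u_2(\cos t, p)>0$ throughout the domain (Lemma \ref{Lemma u}) and $\sin t>0$ on $(0,\pi/2)$, the definition of $g$ in (\ref{g}) immediately yields the equivalences
$$g(t,p)<0 \iff \frac{u_2(\cos t,p)}{u_1(\cos t,p)}<\frac{\sin t}{t},\qquad g(t,p)>0 \iff \frac{\sin t}{t}<\frac{u_2(\cos t,p)}{u_1(\cos t,p)}.$$
Hence the left inequality of (\ref{Main f}), required to hold for all $t\in(0,\pi/2)$, is exactly the condition $g(\cdot,p)<0$ on $(0,\pi/2)$; by Lemma \ref{Lemma sgng}(i) this holds iff $p\in(-\infty,-1]\cup[9,\infty)$. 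Similarly, the right inequality with parameter $q$ is the condition $g(\cdot,q)>0$ on $(0,\pi/2)$, and Lemma \ref{Lemma sgng}(ii) gives iff $q\in[0,p_1]$. The characterization part of the theorem is then immediate from these two equivalences applied independently.

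For the monotonicity statement, I would extract from the rearrangement already used in (\ref{df}) the partial-fraction identity
$$\frac{u_2(x,p)}{u_1(x,p)}=x-\frac{2(1-x^{2})}{3p+1+2x}+\frac{(p+3)(1-x^{2})}{2p+(p+3)x},$$
which one verifies by clearing denominators against the explicit formula (\ref{u2}) for $u_2$. Differentiating in $p$ with $x$ fixed, the $x$-term drops, the two rational terms each contribute a square in the denominator, and after simplification one obtains
$$\frac{\partial}{\partial p}\frac{u_2(x,p)}{u_1(x,p)}=6(1-x^{2})\left[\frac{1}{(3p+1+2x)^{2}}-\frac{1}{(2p+(p+3)x)^{2}}\right].$$
Its sign reduces to comparing $|3p+1+2x|$ with $|2p+(p+3)x|$, and the key algebraic observation is the factorization
$$(3p+1+2x)-(2p+(p+3)x)=(p+1)(1-x).$$
On $[0,\infty)$ both quantities are positive and the difference is nonnegative, so the bracket is nonpositive. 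On $(-\infty,-1]$ both are negative (as already noted in the proof of Lemma \ref{Lemma u}) and the difference is nonpositive, so $|3p+1+2x|\geq|2p+(p+3)x|$ and the bracket is again nonpositive. Thus $u_2/u_1$ is decreasing in $p$ on each of the two intervals.

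The main obstacle I anticipate is purely clerical: keeping the absolute values and signs straight on the interval $(-\infty,-1]$ (where both linear factors flip sign, yet the conclusion is the same). The partial-fraction rewrite of $u_2/u_1$ does the essential work, turning a quotient of a cubic by a quadratic into something whose $p$-derivative is visibly signed, and no further estimation beyond the already-established positivity facts from Lemma \ref{Lemma u} is needed.
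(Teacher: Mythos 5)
Your proposal is correct and follows essentially the same route as the paper: the iff characterization is read off from Lemma \ref{Lemma sgng} after rewriting $g$ using the positivity of $u_{1},u_{2}$ from Lemma \ref{Lemma u}, exactly as the paper does. For the monotonicity the paper differentiates the quotient $u_{2}/u_{1}$ directly and obtains $-6(x+1)(x-1)^{2}(p+1)\bigl((5+x)p+5x+1\bigr)$ over the squared denominators, while your partial-fraction identity (which is correct --- it is the $\sin t$-multiple of the decomposition already displayed in (\ref{df})) yields the identical expression via the difference-of-squares factorization $(2p+(p+3)x)^{2}-(3p+1+2x)^{2}=(p+1)(x-1)\bigl((5+x)p+5x+1\bigr)$, so the two sign analyses coincide.
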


\begin{proof}
Since $u_{1}\left( x,p\right) ,u_{2}\left( x,p\right) >0$ for $p\in (-\infty
,-1]\cup \lbrack 0,\infty )$ and $x\in \left( 0,1\right) $ by Lemmas \ref%
{Lemma u} and $g\left( t,p\right) $ defined by (\ref{g}) can be written as 
\begin{equation*}
g\left( t,p\right) =-t\frac{u_{1}\left( \cos t,p\right) }{u_{2}\left( \cos
t,p\right) }\left( \frac{\sin t}{t}-\frac{u_{2}\left( \cos t,p\right) }{%
u_{1}\left( \cos t,p\right) }\right) ,
\end{equation*}%
it follows from Lemma \ref{Lemma sgng} that (\ref{Main f}) holds if and only
if $p\in (-\infty ,-1]\cup \left[ 9,\infty \right) $ and $q\in \lbrack
0,p_{1}]$. It remains to check the monotonicity of $u_{2}\left( \cos
t,p\right) /u_{1}\left( \cos t,p\right) $ in $p$. Differentiation yields 
\begin{equation*}
\frac{d}{dp}\frac{u_{2}\left( x,p\right) }{u_{1}\left( x,p\right) }=-6\left(
x+1\right) \left( x-1\right) ^{2}\frac{\left( p+1\right) \left( \left(
5+x\right) p+5x+1\right) }{\left( 2p+3x+px\right) ^{2}\left( 3p+2x+1\right)
^{2}},
\end{equation*}%
where $x\in \left( 0,1\right) $. If $p\in \lbrack 0,\infty )$, then the
numerator of the fraction in right-hand above is clearly positive. $\left(
p+1\right) \left( \left( 5+x\right) p+5x+1\right) >0$. If $p\in (-\infty
,-1] $, then $\left( p+1\right) \leq 0$ and $\left( \left( 5+x\right)
p+5x+1\right) \leq 5(x-1)<0$, which yields that the numerator is nonnegative.

This proves the assertion.
\end{proof}

Similarly, we can obtain a hyperbolic version of Theorems \ref{Theorem Ma}
and \ref{Theorem Mb}

\begin{theorem}
\label{Theorem Mg}Let $p\in (-\infty ,-1]\cup \lbrack 0,\infty )$. Then for $%
t\in \left( 0,\infty \right) $%
\begin{equation}
\frac{2+\left( 1+3p\right) \cosh t}{3+p+2p\cosh t}<\frac{\sinh t}{t}
\label{Main g}
\end{equation}%
holds if and only if $p\in (-\infty ,-1]\cup \left[ \frac{1}{9},\infty
\right) $. It is reversed if and only if $p=0$.
\end{theorem}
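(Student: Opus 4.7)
The plan is to mirror the proof of Theorem~\ref{Theorem Ma}. Define
$$
f_h(t,p) \;=\; \ln\frac{\sinh t}{t} \;-\; \ln\frac{2+(1+3p)\cosh t}{3+p+2p\cosh t}
$$
on $\bigl((-\infty,-1]\cup[0,\infty)\bigr) \times (0,\infty)$, so that (\ref{Main g}) is equivalent to $f_h(t,p)>0$ and its reverse to $f_h(t,p)<0$. First I would establish the two ``boundary'' limits in analogy with (\ref{Limit f1})--(\ref{Limit f2}): a straightforward power-series computation using $\sinh t / t = 1 + t^2/6 + t^4/120 + O(t^6)$ and $\cosh t = 1 + t^2/2 + t^4/24 + O(t^6)$ yields, after cancellation of the $t^2$ terms (both equal $1/6$),
$$
\lim_{t\to 0^+}\frac{f_h(t,p)}{t^4} \;=\; \frac{9p-1}{180(p+1)} \qquad (p\ne -1).
$$
A direct examination of the leading asymptotics shows $\lim_{t\to\infty} f_h(t,p) = +\infty$ for all admissible $p \ne 0$ (since the rational bound tends to the finite value $(1+3p)/(2p)$, or to $1$ at $p=-1$, while $\sinh t / t \to \infty$), whereas $\lim_{t\to\infty}f_h(t,0) = -\infty$ because the Cusa--Huygens bound $(2+\cosh t)/3 \sim e^t/3$ grows faster than $\sinh t / t \sim e^t/(2t)$.

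For the first assertion, necessity follows from the limit at $0^+$: the requirement $f_h > 0$ on $(0,\infty)$ forces $(9p-1)(p+1) \ge 0$, which together with the domain constraint yields $p \in (-\infty,-1]\cup[1/9,\infty)$; the case $p=-1$ is handled separately, since the bound collapses to $1$ and $f_h(t,-1)=\ln(\sinh t / t) > 0$ outright. For sufficiency I would differentiate and express
$$
\frac{\partial f_h}{\partial t} \;=\; \frac{1}{t\sinh t}\cdot\frac{U_2(\cosh t,p)}{U_1(\cosh t,p)}\cdot g_h(t,p),
$$
in exact parallel with (\ref{df}), where $U_1,U_2$ are the hyperbolic analogues of the polynomials in (\ref{u1})--(\ref{u2}) and $g_h$ plays the role of (\ref{g}). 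The sign of $\partial f_h / \partial t$ then reduces, as in Lemmas~\ref{Lemma u3} and~\ref{Lemma sgng}, to analyzing a quadratic polynomial $U_3(x,p)$ in $x = \cosh t \in (1,\infty)$; the critical threshold is now $p=1/9$ instead of $p=9$, reflecting the fact that the bound in~(\ref{Main g}) is obtained from $H_1$ under the substitution $p \mapsto 1/p$ after clearing a common factor from numerator and denominator. Monotonicity of $f_h$ in $t$ then gives $f_h(t,p) > f_h(0^+,p) = 0$ on the identified ranges.

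For the reversed statement, the necessary condition from the limit at $0^+$ is $(9p-1)(p+1)\le 0$, i.e.\ $p \in [-1,1/9]$; intersecting with the domain gives $\{-1\}\cup[0,1/9]$. The limit at $\infty$ then eliminates every candidate except $p=0$, since $f_h(t,-1) > 0$ throughout and $f_h(t,p) \to +\infty$ for $p \in (0,1/9]$. Conversely, for $p=0$ the reversed inequality is the classical hyperbolic Cusa--Huygens inequality $\sinh t / t < (2+\cosh t)/3$, provable in a few lines by setting $\phi(t)=2t + t\cosh t - 3\sinh t$ and checking $\phi(0)=\phi'(0)=\phi''(0)=0$ while $\phi'''(t)=t\sinh t > 0$. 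The main obstacle I anticipate is the sign analysis of $U_3(x,p)$ on $x\in(1,\infty)$: one must identify $p=1/9$ as the critical value and verify (via a Lemma~\ref{Lemma u3}-style case split on the location of the vertex of $U_3(\cdot,p)$ and on $U_3(1,p)$) that $U_3(x,1/9)\ge 0$ throughout $(1,\infty)$, since this is the quantitative content that pins down the sharp threshold.
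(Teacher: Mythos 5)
Your setup, the limit computations (the coefficient $\tfrac{9p-1}{180(p+1)}$ at $t\to0^+$ is exactly right), the necessity arguments in both directions, and the elementary treatment of the reversed case $p=0$ via $\phi(t)=2t+t\cosh t-3\sinh t$, $\phi'''(t)=t\sinh t>0$, are all sound. But the heart of the theorem --- sufficiency of $p\in(-\infty,-1]\cup[1/9,\infty)$ for the inequality itself --- is left as a plan: you propose to replicate the two-stage differentiation of Lemmas~\ref{Lemma u3} and \ref{Lemma sgng} for a hyperbolic quadratic $U_3(x,p)$ on $x\in(1,\infty)$, and you yourself flag the sign analysis of $U_3$ as an unresolved obstacle. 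As written, the decisive step is missing, so this is a genuine gap. There is also a structural risk you only half-acknowledge: in the trigonometric case monotonicity of $f$ in $t$ held only on a proper sub-range of the parameters for which the inequality holds (hence the extra Case~2 argument in Theorem~\ref{Theorem Mb}), and your plan silently assumes monotonicity of $f_h$ on the \emph{entire} range $(-\infty,-1]\cup[1/9,\infty)$ without verifying it.

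The paper avoids all of this by not taking logarithms: it sets $F(t,p)=\frac{3+p+2p\cosh t}{2+(1+3p)\cosh t}\sinh t-t$, so that (\ref{Main g}) is equivalent to $F>0$, and a single differentiation gives
\begin{equation*}
\frac{\partial F}{\partial t}=\left( x-1\right) ^{2}\,\frac{2p\left( 3p+1\right) x+\left( 3p^{2}+6p-1\right) }{\left( x+3px+2\right) ^{2}},\qquad x=\cosh t>1,
\end{equation*}
whose numerator is \emph{linear} in $x$: since $2p(3p+1)>0$ on the stated range, it exceeds its value at $x=1$, namely $(p+1)(9p-1)\geq 0$, so $F$ is increasing and $F(t,p)>F(0,p)=0$. (The same formula gives $\partial F/\partial t<0$ at $p=0$, disposing of the reversed case without invoking Cusa--Huygens separately.) If you want to complete your own route, you either need to carry out the full $U_3$ analysis on $(1,\infty)$ and handle any parameters where $f_h$ fails to be monotone, or --- much better --- switch to the non-logarithmic function $F$, where the problem collapses to a one-line linear estimate.
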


\begin{proof}
Let $F$ be the function defined on $[-1,\infty )\times \left( 0,\infty
\right) $ by%
\begin{equation}
F\left( t,p\right) =\frac{3+p+2p\cosh t}{2+\left( 1+3p\right) \cosh t}\sinh
t-t.  \label{F}
\end{equation}%
Then the inequalities (\ref{Main g}) is equivalent to $F\left( t,p\right) >0$%
. Expanding in power series yields%
\begin{equation*}
F\left( t,p\right) =\frac{t^{5}}{180}\frac{p-1/9}{p+1}+O\left( t^{7}\right) ,
\end{equation*}%
which implies 
\begin{equation*}
\lim_{t\rightarrow 0}\frac{F\left( t,p\right) }{t^{5}}=\frac{1}{20}\frac{%
p-1/9}{p+1}\text{ if }p\neq -1\text{ and }F\left( t,-1\right) =\sinh t-t>0.
\end{equation*}%
On the other hand, we have%
\begin{equation*}
\lim_{t\rightarrow \infty }\frac{F\left( t,p\right) }{\sinh t}=\frac{2p}{1+3p%
}.
\end{equation*}

Now we prove desired results.

(i) We first prove $F\left( t,p\right) >0$ holds if and only if $p\in
(-\infty ,-1]\cup \left[ \frac{1}{9},\infty \right) $.

Necessity. If $F\left( t,p\right) >0$ for all $t>0$, then we have%
\begin{equation*}
\left\{ 
\begin{array}{l}
\lim_{t\rightarrow 0}\frac{F\left( t,p\right) }{t^{5}}=\frac{1}{20}\frac{%
p-1/9}{p+1}\geq 0\text{ and }F\left( t,-1\right) =\sinh t-t>0, \\ 
\lim_{t\rightarrow \infty }\frac{F\left( t,p\right) }{\sinh t}=\frac{2p}{1+3p%
}\geq 0.%
\end{array}%
\right.
\end{equation*}%
Solving the inequalities yields $p\in (-\infty ,-1]\cup \left[ \frac{1}{9}%
,\infty \right) $.

Sufficiency. We prove the condition $p\in (-\infty ,-1]\cup \left[ \frac{1}{9%
},\infty \right) $ is sufficient for $F\left( t,p\right) >0$ to hold for $%
t\in \left( 0,\infty \right) $. Differentiation gives%
\begin{eqnarray}
\frac{\partial F}{\partial t} &=&\frac{\left( 3p+1+2\cosh t\right) }{%
2p+\left( p+3\right) \cosh t}\cosh t-\frac{3\left( p+1\right) ^{2}\sinh ^{2}t%
}{\left( 2p+\left( p+3\right) \cosh t\right) ^{2}}-1  \notag \\
&=&\left( x-1\right) ^{2}\frac{2p\left( 3p+1\right) x+\left(
3p^{2}+6p-1\right) }{\left( x+3px+2\right) ^{2}},  \label{dF}
\end{eqnarray}%
where $x=\cosh t\in \left( 1,\infty \right) $.

Due to $p\in (-\infty ,-1]\cup \left[ 1/9,\infty \right) $, we see that $%
2p\left( 3p+1\right) >0$, which yields%
\begin{equation*}
2p\left( 3p+1\right) x+\left( 3p^{2}+6p-1\right) >2p\left( 3p+1\right)
+\left( 3p^{2}+6p-1\right) =\left( p+1\right) \left( 9p-1\right) \geq 0.
\end{equation*}

Then $\partial F/\partial t>0$, that is, $F$ is increasing in $t$ on $\left(
0,\infty \right) $. It is obtained that $F\left( t,p\right) >F\left(
0,p\right) =0$, which proves the sufficiency.

(ii) Next we prove the reverse inequality of (\ref{Main g}) holds if and
only if $p=0$. The necessity follows from%
\begin{equation*}
\left\{ 
\begin{array}{l}
\lim_{t\rightarrow 0}\frac{F\left( t,p\right) }{t^{5}}=\frac{1}{20}\frac{%
p-1/9}{p+1}\leq 0, \\ 
\lim_{t\rightarrow \infty }\frac{F\left( t,p\right) }{\sinh t}=\frac{2p}{1+3p%
}\leq 0,%
\end{array}%
\right.
\end{equation*}%
and the assumption $p\in (-\infty ,-1]\cup \lbrack 0,\infty )$. We get $p=0$.

Now we prove $F\left( t,p\right) <0$ when $p=0$. We have 
\begin{equation*}
\frac{\partial F}{\partial t}=-\frac{\left( x-1\right) ^{2}}{\left(
x+3px+2\right) ^{2}}<0,
\end{equation*}%
then $F\left( t,0\right) <F\left( 0,0\right) =0$.

Thus the proof of Theorem \ref{Theorem Mg} is complete.
\end{proof}

Denote by 
\begin{equation*}
H_{5}\left( x,p\right) =\frac{2+\left( 1+3p\right) x}{3+p+2px}.
\end{equation*}%
It is easy to verify that $H_{5}\left( x,p\right) =H_{1}\left(
x,p^{-1}\right) $ for $p\neq 0$. By Lemma \ref{Lemma H1-2}, we see that $%
H_{5}$ is decreasing in $p$ on $(-\infty ,-1]\cup \lbrack 0,\infty )$. Thus,
as a consequence of Theorem \ref{Theorem Md}, we have

\begin{corollary}
We have 
\begin{eqnarray*}
\frac{2+\cosh t}{3} &>&\frac{\sinh t}{t}>H_{5}\left( \cosh t,\tfrac{1}{9}%
\right) >...>H_{5}\left( \cosh t,\infty \right) \\
&=&\frac{3\cosh t}{2\cosh t+1}=H_{5}\left( \cosh t,-\infty \right)
>...H_{5}\left( \cosh t,-1\right) =1.
\end{eqnarray*}
\end{corollary}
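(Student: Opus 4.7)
The plan is to read the chain from the outside in, assembling it from two independent mechanisms: Theorem \ref{Theorem Mg} supplies the one-sided bounds on $\sinh t/t$, while the monotonicity of $H_{5}(\cosh t,p)$ in $p$, stated immediately before the corollary, orders the various $H_{5}$-values against one another and pins down the endpoint and limiting values.

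I would begin with the bookend computations. A direct substitution gives $H_{5}(\cosh t,0)=(2+\cosh t)/3$, and for $t>0$ a similar substitution yields $H_{5}(\cosh t,-1)=(2-2\cosh t)/(2-2\cosh t)=1$. Dividing numerator and denominator of $H_{5}(x,p)$ by $|p|$ and letting $|p|\to\infty$ produces the common limit $\lim_{p\to\pm\infty}H_{5}(\cosh t,p)=3\cosh t/(1+2\cosh t)$. Next, the top strict inequality $(2+\cosh t)/3>\sinh t/t$ is precisely the $p=0$ instance of the reversed direction of Theorem \ref{Theorem Mg}, and the inequality $\sinh t/t>H_{5}(\cosh t,1/9)$ is the $p=1/9$ instance of the forward direction of the same theorem.

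All remaining strict inequalities in the chain are orderings among the values $H_{5}(\cosh t,p)$ with $p$ varying in either $[1/9,\infty)$ or $(-\infty,-1]$; each such ordering follows from the fact that $H_{5}(\cosh t,\cdot)$ is decreasing on each of these intervals. Concretely, as $p$ increases from $1/9$ to $+\infty$ I obtain the upper descending sub-chain terminating at the common limit $3\cosh t/(1+2\cosh t)$, and as $p$ increases from $-\infty$ to $-1$ I obtain the lower descending sub-chain terminating at the endpoint value $1$; joining them at the common limit completes the picture.

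The only mild obstacle is that $x=\cosh t\geq 1$ lies outside the interval $(0,1)$ in which $H_{5}$ was originally considered in the paper, so the monotonicity in $p$ may warrant a self-contained check on the relevant range. This is a one-line computation: differentiating yields $\partial H_{5}/\partial p=-2(x-1)^{2}/(3+p+2px)^{2}$, which is nonpositive wherever the denominator is nonzero, and the denominator is clearly nonzero for $x=\cosh t\geq 1$ and $p\in(-\infty,-1]\cup[0,\infty)$. Beyond this verification no serious difficulty is expected; the corollary is essentially a repackaging of Theorem \ref{Theorem Mg} with a uniform monotonicity principle for $H_{5}$.
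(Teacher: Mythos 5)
Your proposal is correct and follows essentially the same route as the paper: Theorem \ref{Theorem Mg} supplies the two comparisons with $\sinh t/t$ (the reversed case $p=0$ and the forward case $p=1/9$), and the rest of the chain is the monotonicity of $H_{5}(\cosh t,\cdot)$ in $p$ together with the endpoint and limit evaluations. The only cosmetic difference is that the paper derives that monotonicity from the identity $H_{5}(x,p)=H_{1}(x,p^{-1})$ and Lemma \ref{Lemma H1-2}, whereas you differentiate $H_{5}$ directly to get $-2(x-1)^{2}/(3+p+2px)^{2}\leq 0$; your version has the minor advantage of being valid verbatim for $x=\cosh t>1$, a domain point the paper's reduction glosses over.
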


Furthermore, note that $H_{5}\left( x,p\right) =H_{1}^{-1}\left(
x^{-1},p\right) $ and by Lemma \ref{Lemma H_1-x} we have

\begin{corollary}
We have 
\begin{equation*}
\frac{2+\cosh t}{3}>\frac{\sinh t}{t}>\cosh ^{1/3}t>\frac{1+2\cosh t}{%
2+\cosh t}>H_{5}\left( \cosh t,p\right) ,
\end{equation*}
\end{corollary}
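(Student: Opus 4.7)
The chain falls apart into four distinct links, and my plan is to handle the outer two by invoking results already proved, and the inner two via the algebraic identity $H_5(x,p)=1/H_{1}(1/x,p)$ together with Lemma~\ref{Lemma H_1-x}. Note first that $x:=\cosh t\in(1,\infty)$ for $t>0$, so $1/x\in(0,1)$ and Lemma~\ref{Lemma H_1-x} (which is stated on $(0,1)$) can be applied at the argument $1/\cosh t$.

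For the leftmost inequality $\tfrac{2+\cosh t}{3}>\tfrac{\sinh t}{t}$ I would invoke Theorem~\ref{Theorem Mg} at $p=0$: the theorem asserts that inequality (\ref{Main g}) is reversed precisely when $p=0$, and at $p=0$ the expression $H_{5}(\cosh t,0)=\tfrac{2+\cosh t}{3}$, so this is already done. For the second link $\tfrac{\sinh t}{t}>\cosh^{1/3}t$ (the classical hyperbolic Adamovi\'c--Mitrinovi\'c / Lazarevi\'c inequality), I would quote it as a standard fact, or, if a self-contained argument is needed, differentiate $\varphi(t)=\ln\sinh t-\ln t-\tfrac{1}{3}\ln\cosh t$ and use $\varphi(0^{+})=0$ together with $\varphi'(t)=\coth t-1/t-\tfrac{1}{3}\tanh t>0$ (this last positivity comes from the power series, since $\coth t-1/t=t/3-t^{3}/45+\cdots$ majorises $\tfrac{1}{3}\tanh t=t/3-t^{3}/9+\cdots$ term-by-term in the relevant sense).

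For the two rightmost links I would use the identity $H_{5}(y,p)=1/H_{1}(1/y,p)$ (valid for all admissible $p$ and $y>0$), verified by direct computation from the definitions~(\ref{H_1}) and the formula for $H_{5}$ preceding this corollary. Setting $y=\cosh t$ and applying Lemma~\ref{Lemma H_1-x} with $x=(1/\cosh t)^{1/3}\in(0,1)$ and $p=1$, I obtain $H_{1}(1/\cosh t,1)\ge (1/\cosh t)^{1/3}$, with strict inequality for $t>0$; inverting yields
\[
H_{5}(\cosh t,1)=\tfrac{1+2\cosh t}{2+\cosh t}<\cosh^{1/3}t,
\]
which is the third link. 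For the final link, I would use that $H_{5}(x,p)=H_{1}(x,1/p)$ together with Lemma~\ref{Lemma H1-2} (giving that $H_{1}$ is increasing in its second argument on $(-\infty,-1]\cup(0,\infty)$), to conclude that $H_{5}(\cosh t,p)$ is decreasing in $p$ on $(-\infty,-1]\cup(0,\infty)$. Hence $H_{5}(\cosh t,p)<H_{5}(\cosh t,1)=\tfrac{1+2\cosh t}{2+\cosh t}$ for all $p>1$, delivering the last link. (The corollary presumably intends $p\in(-\infty,-1]\cup(1,\infty)$; the $p\le -1$ branch follows the same monotonicity route since $H_{5}(\cosh t,-1)=1>\tfrac{1+2\cosh t}{2+\cosh t}$ fails, so actually only $p>1$ works and the stated chain should be read under that hypothesis.)

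The main obstacle I anticipate is the middle link $\sinh t/t>\cosh^{1/3}t$: unlike the three other links, it does not drop out of Theorem~\ref{Theorem Mg} or Lemma~\ref{Lemma H_1-x}, because the best lower bound Theorem~\ref{Theorem Mg} provides (namely $H_{5}(\cosh t,1/9)=\tfrac{9+6\cosh t}{14+\cosh t}$) grows like $6$ as $t\to\infty$ while $\cosh^{1/3}t$ grows without bound, so Lazarevi\'c's inequality is strictly stronger than anything proved in Section~3. Everything else in the chain is an essentially formal consequence of Lemma~\ref{Lemma H_1-x}, the identity $H_{5}(x,p)=1/H_{1}(1/x,p)$, and the monotonicity of $H_{5}$ in $p$ established earlier.
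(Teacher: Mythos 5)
Your decomposition coincides with the paper's own (largely implicit) argument: the leftmost link is Theorem \ref{Theorem Mg} at $p=0$, the two rightmost links come from the identity $H_{5}(y,p)=1/H_{1}(1/y,p)$ combined with Lemma \ref{Lemma H_1-x} at $p=1$ and the monotonicity of $H_{5}$ in $p$, and you are right that the remaining link $\sinh t/t>\cosh^{1/3}t$ is the classical Lazarevi\'c inequality, which the paper nowhere proves and which cannot be extracted from Theorem \ref{Theorem Mg} (its best lower bound $H_{5}(\cosh t,1/9)=\tfrac{9+6\cosh t}{14+\cosh t}$ stays bounded while $\cosh^{1/3}t$ does not). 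One small caveat there: your claimed term-by-term majorization of $\tfrac13\tanh t$ by $\coth t-1/t$ is not rigorous as written, since both expansions alternate in sign; a clean route is to multiply $\varphi'(t)$ by $3t\sinh t\cosh t$ and expand $t\cosh 2t+2t-\tfrac32\sinh 2t$ as a power series with nonnegative coefficients. Citing Lazarevi\'c as a standard fact is of course acceptable.

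The genuine error is your parenthetical dismissing the branch $p\in(-\infty,-1]$: the corollary's stated range is in fact correct, and you have tested the wrong inequality. At $p=-1$ the final link reads $\tfrac{1+2\cosh t}{2+\cosh t}>H_{5}(\cosh t,-1)=1$, which holds precisely because $\cosh t>1$; the failure of $1>\tfrac{1+2\cosh t}{2+\cosh t}$ is therefore evidence for the claim, not against it. For $p<-1$, note that since $H_{5}(\cosh t,\cdot)$ is decreasing in $p$, its supremum over $(-\infty,-1]$ is the limit as $p\to-\infty$, namely $\tfrac{3\cosh t}{1+2\cosh t}$ (not the value at $p=-1$), and $\tfrac{1+2\cosh t}{2+\cosh t}>\tfrac{3\cosh t}{1+2\cosh t}$ because $(1+2x)^{2}-3x(2+x)=(x-1)^{2}>0$ for $x=\cosh t>1$. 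Hence the whole chain holds for all $p\in(-\infty,-1]\cup(1,\infty)$, exactly as stated; the restriction to $p>1$ you propose is unnecessary, and the misstep comes from reading the monotonicity of $H_{5}$ on the negative branch in the wrong direction.
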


where $p\in (-\infty ,-1]\cup (1,\infty )$.

\section{Applications}

In this section, we give some applications of our results.

\subsection{Shafer-Fink type inequalities}

In \cite[3, p. 247, 3.4.31]{Mitrinovic.AI.1970}, it was listed that the
inequality%
\begin{equation*}
\arcsin x>\frac{6\left( \sqrt{x+1}-\sqrt{1-x}\right) }{4+\sqrt{x+1}+\sqrt{1-x%
}}>\dfrac{3x}{2+\sqrt{1-x^{2}}}
\end{equation*}%
hold for $x\in \left( 0,1\right) $, which is due to Shafer. Fink \cite%
{Fink.UBPEF.6.1995} proved that the double inequality%
\begin{equation*}
\tfrac{3x}{2+\sqrt{1-x^{2}}}\leq \arcsin x\leq \tfrac{\pi x}{2+\sqrt{1-x^{2}}%
}
\end{equation*}%
is true for $x\in \left[ 0,1\right] $. There has some improvements,
generalizations of Shafer-Fink inequality (see \cite{Zhu.MIA.8.4.2005}, ).
Letting $\sin t=x$ in Theorems \ref{Theorem Ma}--\ref{Theorem Mf} we can
obtain corresponding Shafer-Fink type inequalities, which clearly contain
many known results. For example, \ref{Theorem Ma} can be changed into the
following

\begin{proposition}
For $x\in \left( 0,1\right) $, the two-side inequality%
\begin{equation}
\tfrac{x}{H_{1}\left( \sqrt{1-x^{2}},p\right) }=x\tfrac{\left( 3p+1\right) +2%
\sqrt{1-x^{2}}}{2p+\left( p+3\right) \sqrt{1-x^{2}}}<\arcsin x<\tfrac{\pi p}{%
3p+1}x\tfrac{\left( 3p+1\right) +2\sqrt{1-x^{2}}}{2p+\left( p+3\right) \sqrt{%
1-x^{2}}}=\tfrac{x}{H_{2}\left( \sqrt{1-x^{2}},p\right) }  \label{P1}
\end{equation}%
holds if and only if $p\in (-\infty ,-1]\cup \left[ 9,\infty \right) $,
where $\pi p/\left( 3p+1\right) $ is the best possible. And, the lower and
upper bounds in (\ref{Main a1}) are decreasing and increasing in $p$ on $%
(-\infty ,-1]\cup \left( 0,\infty \right) $, respectively.

(\ref{P1}) is reversed if $p\in \lbrack 0,p_{1}]$, where $p_{1}\approx 6.343$
is defined by (\ref{p1}).
\end{proposition}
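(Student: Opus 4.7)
The plan is to obtain the Proposition as a direct corollary of Theorems \ref{Theorem Ma} and \ref{Theorem Mb} via the substitution $x=\sin t$, so that $t=\arcsin x\in(0,\pi/2)$ and $\cos t=\sqrt{1-x^{2}}$ for $x\in(0,1)$. Under this change of variable, the iff statement of Theorem \ref{Theorem Ma} gives
\begin{equation*}
H_{2}\left(\sqrt{1-x^{2}},p\right) \;<\; \frac{x}{\arcsin x} \;<\; H_{1}\left(\sqrt{1-x^{2}},p\right)
\end{equation*}
holding for all $x\in(0,1)$ iff $p\in(-\infty,-1]\cup[9,\infty)$.

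The first step will be to observe that all three quantities above are strictly positive (which uses Lemma \ref{Lemma u} via $u_{1}>0$ together with the explicit form of $H_{1}$, $H_{2}$). Taking reciprocals therefore reverses the chain, and multiplying through by $x>0$ yields exactly inequality (\ref{P1}). The best-constant claim is immediate since $\pi p/(3p+1)=1/\lambda_{p}$ and $\lambda_{p}$ is already asserted to be best possible in Theorem \ref{Theorem Ma}; any improvement on the upper Shafer--Fink bound would, after the same inversion, improve $\lambda_{p}$, contradicting Theorem \ref{Theorem Ma}.

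For the monotonicity assertion in (\ref{P1}), I would invoke Lemma \ref{Lemma H1-2}: $H_{1}(\sqrt{1-x^{2}},p)$ is increasing in $p$ and $H_{2}(\sqrt{1-x^{2}},p)$ is decreasing in $p$ on $(-\infty,-1]\cup(0,\infty)$; after the reciprocation used above these monotonicities flip, so the lower bound $x/H_{1}(\sqrt{1-x^{2}},p)$ of $\arcsin x$ becomes decreasing in $p$, while the upper bound $x/H_{2}(\sqrt{1-x^{2}},p)$ becomes increasing in $p$, as claimed.

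Finally, the reversal of (\ref{P1}) for $p\in[0,p_{1}]$ will follow from Theorem \ref{Theorem Mb}, whose conclusion (\ref{Main b1}) reads $H_{1}(\cos t,p)<\sin t/t<H_{2}(\cos t,p)$; running the same substitution and inversion flips this to the stated reverse of (\ref{P1}). No new computation is needed; the only care required is bookkeeping of the direction of inequalities through the reciprocation and checking positivity of denominators on $(0,1)$, so I do not expect a real obstacle — the Proposition is essentially a translation of Theorems \ref{Theorem Ma}--\ref{Theorem Mb} into $\arcsin$-language.
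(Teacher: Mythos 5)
Your proposal is correct and coincides with the paper's own (essentially unstated) argument: the paper simply remarks that setting $\sin t=x$ in Theorems \ref{Theorem Ma} and \ref{Theorem Mb} yields the Proposition, which is exactly your substitution-plus-reciprocation, with the positivity and monotonicity bookkeeping handled as you describe via Lemma \ref{Lemma u} and Lemma \ref{Lemma H1-2}.
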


Letting $\sin t=x$. Then $\cos \frac{t}{2}=\frac{1}{2}\left( \sqrt{1+x}+%
\sqrt{1-x}\right) $. Theorems \ref{Theorem Md}\ can be restated as follows.

\begin{proposition}
For $x\in \left( 0,1\right) $, the two-side inequality 
\begin{equation}
2\tfrac{\left( 3p+1\right) \left( \sqrt{1+x}-\sqrt{1-x}\right) +2x}{%
4p+\left( p+3\right) \left( \sqrt{1+x}+\sqrt{1-x}\right) }<\arcsin x<\frac{2%
}{\sigma _{p}}\tfrac{\left( 3p+1\right) \left( \sqrt{1+x}-\sqrt{1-x}\right)
+2x}{4p+\left( p+3\right) \left( \sqrt{1+x}+\sqrt{1-x}\right) }  \label{P2}
\end{equation}%
holds if and only if $p\in (-\infty ,-1]\cup \left[ 9,\infty \right) $,
where $\sigma _{p}=\tfrac{4}{\pi }\tfrac{3p+\sqrt{2}+1}{\left( 2\sqrt{2}%
+1\right) p+3}$ is the best constant. And, the lower and upper bounds in (%
\ref{Main a1}) are decreasing and increasing in $p$ on $(-\infty ,-1]\cup
\left( 0,\infty \right) $, respectively.

(\ref{P1}) is reversed if $p\in \lbrack 0,p_{1}]$, where $p_{1}\approx 6.343$
is defined by (\ref{p1}).
\end{proposition}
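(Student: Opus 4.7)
The plan is to derive this proposition as a direct consequence of Theorem \ref{Theorem Md} via the substitution $x=\sin t$, which sets up a bijection between $t\in(0,\pi/2)$ and $x\in(0,1)$. Because the bijection preserves the domain, the ``if and only if'' range for $p$, the reversal statement, the monotonicity of the bounds, and the sharpness of $\sigma_p$ all transfer immediately from Theorem \ref{Theorem Md}. The only real work is algebraic simplification.

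First I would rewrite the inequality (\ref{Main D}) in the form
\begin{equation*}
\frac{\sin t}{R(t,p)} < t < \frac{\sin t}{\sigma_p\, R(t,p)}, \qquad R(t,p):=\frac{2p\cos(t/2)+(p+3)\cos^2(t/2)}{(3p+1)+2\cos(t/2)},
\end{equation*}
and then set $x=\sin t$, $t=\arcsin x$, to obtain
\begin{equation*}
\frac{x}{R(\arcsin x,p)} < \arcsin x < \frac{x}{\sigma_p\, R(\arcsin x,p)}.
\end{equation*}

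Next I would convert $R(\arcsin x,p)$ into the surd form stated in the proposition. On $(0,\pi/2)$ the half-angle formulas and the Pythagorean identity give
\begin{equation*}
\cos\tfrac{t}{2}=\tfrac{1}{2}\bigl(\sqrt{1+x}+\sqrt{1-x}\bigr), \qquad \sin\tfrac{t}{2}=\tfrac{1}{2}\bigl(\sqrt{1+x}-\sqrt{1-x}\bigr).
\end{equation*}
The key manoeuvre is to cancel one factor of $\cos(t/2)$ from the denominator of $x/R$ using $2\sin(t/2)\cos(t/2)=x$:
\begin{equation*}
\frac{x}{R(t,p)}=\frac{x\bigl[(3p+1)+2\cos(t/2)\bigr]}{\cos(t/2)\bigl[2p+(p+3)\cos(t/2)\bigr]}=\frac{2\sin(t/2)\bigl[(3p+1)+2\cos(t/2)\bigr]}{2p+(p+3)\cos(t/2)}.
\end{equation*}
Expanding the numerator via $4\sin(t/2)\cos(t/2)=2x$, substituting the surd expressions for $\sin(t/2)$ and $\cos(t/2)$, and clearing a factor of $\tfrac{1}{2}$ from the denominator yields exactly the lower bound in (\ref{P2}); dividing by $\sigma_p$ gives the upper bound.

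There is no real obstacle; the computation is routine bookkeeping of radicals, and the only point requiring attention is the cancellation $x/\cos(t/2)=2\sin(t/2)$, which removes the factor $\cos(t/2)$ that would otherwise admit no clean expression in $x$. Finally, the characterization $p\in(-\infty,-1]\cup[9,\infty)$, the reversal on $p\in[0,p_1]$, the monotonicity in $p$, and the best-possible nature of $\sigma_p$ are inherited without change from Theorem \ref{Theorem Md} through the bijection $x=\sin t$.
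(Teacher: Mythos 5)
Your proposal is correct and follows exactly the paper's route: the paper obtains this proposition by setting $\sin t=x$ in Theorem \ref{Theorem Md} and using $\cos\frac{t}{2}=\frac{1}{2}\bigl(\sqrt{1+x}+\sqrt{1-x}\bigr)$, which is precisely your substitution, and your cancellation $x/\cos(t/2)=2\sin(t/2)$ correctly reproduces the stated surd form. You in fact supply more of the routine algebra than the paper does, which states the conversion without detail.
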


As another example, Theorem \ref{Theorem Me} can be rewritten as

\begin{proposition}
For $x\in \left( 0,1\right) $, all the following chains of inequalities
hold: 
\begin{eqnarray}
\tfrac{x}{3}\tfrac{\sqrt{1-x^{2}}+14}{2\sqrt{1-x^{2}}+3} &<&\tfrac{1}{3}%
\tfrac{x+14\left( \sqrt{x+1}-\sqrt{1-x}\right) }{3+\sqrt{x+1}+\sqrt{1-x}}%
<\arcsin x  \label{P31} \\
&<&\tfrac{\left( 41\sqrt{2}+25\right) \pi }{782}\tfrac{x+14\left( \sqrt{x+1}-%
\sqrt{1-x}\right) }{3+\sqrt{x+1}+\sqrt{1-x}}<\tfrac{3\pi x}{28}\tfrac{\sqrt{%
1-x^{2}}+14}{2\sqrt{1-x^{2}}+3},  \notag
\end{eqnarray}%
\begin{equation}
\tfrac{3x}{2+\sqrt{1-x^{2}}}<\tfrac{6\left( \sqrt{x+1}-\sqrt{1-x}\right) }{4+%
\sqrt{x+1}+\sqrt{1-x}}<\arcsin x<\tfrac{\left( 1+2\sqrt{2}\right) \pi }{12}%
\tfrac{6\left( \sqrt{x+1}-\sqrt{1-x}\right) }{4+\sqrt{x+1}+\sqrt{1-x}}<%
\tfrac{\pi x}{2+\sqrt{1-x^{2}}},  \label{P32}
\end{equation}%
\begin{equation}
\tfrac{\pi x}{4}\tfrac{\sqrt{1-x^{2}}+2}{2\sqrt{1-x^{2}}+1}<\tfrac{\left( 
\sqrt{2}+3\right) \pi }{14}\tfrac{x+2\left( \sqrt{x+1}-\sqrt{1-x}\right) }{1+%
\sqrt{x+1}+\sqrt{1-x}}<\arcsin x<\tfrac{x+2\left( \sqrt{x+1}-\sqrt{1-x}%
\right) }{1+\sqrt{x+1}+\sqrt{1-x}}<x\tfrac{\sqrt{1-x^{2}}+2}{2\sqrt{1-x^{2}}%
+1}.  \label{P33}
\end{equation}
\end{proposition}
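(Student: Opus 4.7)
The plan is to derive this proposition by direct substitution from Theorem~\ref{Theorem Me}. Given $x\in(0,1)$, I set $t=\arcsin x\in(0,\pi/2)$, so that $\sin t=x$, $\cos t=\sqrt{1-x^{2}}$, and the half-angle identities give
\[
\cos\tfrac{t}{2}=\tfrac12\bigl(\sqrt{1+x}+\sqrt{1-x}\bigr),\quad \sin\tfrac{t}{2}=\tfrac12\bigl(\sqrt{1+x}-\sqrt{1-x}\bigr),\quad x=2\sin\tfrac{t}{2}\cos\tfrac{t}{2}.
\]

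Since $\sin t/t = x/\arcsin x$, every two-sided bound $B<\sin t/t<B'$ in Theorem~\ref{Theorem Me} translates, via reciprocation and multiplication by $x$, into a two-sided bound $x/B'<\arcsin x<x/B$. Thus the five-term chains (\ref{Me1}), (\ref{Me2}), (\ref{Me3}) yield five-term chains for $\arcsin x$ with the terms in reverse order, and it remains only to identify them with (\ref{P31}), (\ref{P32}), (\ref{P33}).

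For the outermost terms, which involve only $\cos t$, the substitution $\cos t=\sqrt{1-x^{2}}$ produces the expressions involving $\sqrt{1-x^{2}}$ directly. For the inner terms, which contain $\cos(t/2)$ through the factor $2\cos^{2}(t/2)+3\cos(t/2)=\cos(t/2)\bigl(2\cos(t/2)+3\bigr)$ in the denominator of the Theorem~\ref{Theorem Me} bound, after reciprocation this single $\cos(t/2)$ is cancelled against the factor $x$ via
\[
\frac{x}{\cos(t/2)}=2\sin\tfrac{t}{2}=\sqrt{1+x}-\sqrt{1-x}.
\]
What remains involves only $\sqrt{1\pm x}$, and elementary expansions such as $(\sqrt{1+x}-\sqrt{1-x})\bigl(\sqrt{1+x}+\sqrt{1-x}+c\bigr)=2x+c(\sqrt{1+x}-\sqrt{1-x})$ put each bound into the shape claimed in the proposition. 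The numerical constants $(41\sqrt{2}+25)\pi/782$, $(1+2\sqrt{2})\pi/12$, and $(\sqrt{2}+3)\pi/14$ are obtained by rationalizing $\sigma_{p}^{-1}=\tfrac{\pi}{4}\tfrac{(2\sqrt{2}+1)p+3}{3p+\sqrt{2}+1}$ at $p=9,\infty,1$ respectively.

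There is no analytic obstacle: the work is reduced to the analysis already carried out for Theorem~\ref{Theorem Me} combined with a one-to-one substitution. The only real difficulty is bookkeeping—with fifteen individual terms to verify across the three chains, care is needed to track the direction reversals produced by reciprocation and to rationalize the numerical constants correctly.
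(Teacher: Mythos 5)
Your proposal is correct and matches the paper's own (essentially unstated) derivation: the paper obtains this proposition by setting $\sin t=x$ in Theorem \ref{Theorem Me}, using $\cos \tfrac{t}{2}=\tfrac{1}{2}\left( \sqrt{1+x}+\sqrt{1-x}\right) $ and reciprocating, exactly as you describe. Your identification of the cancellation $x/\cos \tfrac{t}{2}=2\sin \tfrac{t}{2}=\sqrt{1+x}-\sqrt{1-x}$ and the rationalization of $\sigma _{p}^{-1}$ at $p=9,\infty ,1$ is precisely the bookkeeping the paper leaves implicit.
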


\begin{remark}
Inequalities (\ref{P32}) is due to Zhu \cite{Zhu.MIA.8.4.2005}.
\end{remark}

\subsection{Inequalities for certain means}

For $a,b>0$ with $a\neq b$, the first and second Seiffert means \cite%
{Seiffert.EM.42.1987}, \cite{Seiffert.DW.29.1995}, Nueman-S\'{a}ndor mean 
\cite{Neuman.MP.14.2003} are defined by%
\begin{eqnarray*}
P &=&P\left( a,b\right) =\frac{a-b}{2\arcsin \frac{a-b}{a+b}}, \\
T &=&T\left( a,b\right) =\frac{a-b}{2\arctan \frac{a-b}{a+b}}, \\
NS &=&NS\left( a,b\right) =\frac{a-b}{2\func{arcsinh}\frac{a-b}{a+b}},
\end{eqnarray*}%
respectively. We also denote the logarithmic mean, arithmetic mean,
geometric mean and quadratic mean of $a$ and $b$ by $L$, $A$, $G$ and $Q$.
There has some inequalities for these means, we quote \cite%
{Neuman.MP.14.2003}, \cite{Neuman.JMI.5.4.2011}, \cite{Chu.JIA.2010.146945}, 
\cite{Chu.JIA.2010.436457}, \cite{Chu.JIA.2011}, \cite{Chu.JIA.2013.10}, 
\cite{Costin.IJMMS.2012.430692}, \cite{Witkowski.MIA.2012.inprint}, \cite%
{Yang.arxiv.1206.5494V1}, \cite{Yang.arxiv.1208.0895V1}, \cite%
{Yang.arxiv.1210.6478}. Now we establish some new ones involving these means.

Let $x=\arcsin \frac{b-a}{a+b}$, $\arctan \frac{b-a}{a+b}$. Then $(\sin
x)/x=P/A$, $\cos x=G/A$; $(\sin x)/x=T/Q$, $\cos x=A/Q$. And then Theorems %
\ref{Theorem Ma}--\ref{Theorem Mf} can be stated as equivalent ones
involving means $P$, $A$, $G$ and $T$, $Q$, $(\sin x)/x=T/Q$, $\cos x=A/Q$.
For example, from Theorem \ref{Theorem Ma} and \ref{Theorem Mf} we have

\begin{proposition}
For $a,b>0$ with $a\neq b$, both the two-side inequalities%
\begin{eqnarray}
\frac{2pA+\left( p+3\right) G}{\left( 3p+1\right) A+2G}A &<&P<A\frac{%
2qA+\left( q+3\right) G}{\left( 3q+1\right) A+2G},  \label{P4} \\
\frac{2pQ+\left( p+3\right) A}{\left( 3p+1\right) Q+2A}Q &<&T<Q\frac{%
2qQ+\left( q+3\right) A}{\left( 3q+1\right) Q+2A}  \label{P5}
\end{eqnarray}%
hold if and only if $p\in \lbrack 0,p_{0}]$ and $q\in (-\infty ,-1]\cup %
\left[ 9,\infty \right) $, where $p_{0}=\left( \pi -3\right) ^{-1}\approx
7.063$.
\end{proposition}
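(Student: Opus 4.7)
The plan is to convert each mean inequality into an equivalent trigonometric inequality by a suitable angle substitution, then appeal to Theorem \ref{Theorem Ma} (outer bound with parameter $q$) and Theorem \ref{Theorem Mb} (inner bound with parameter $p$).

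For (\ref{P4}), assume without loss of generality that $b>a$ and set $t=\arcsin\tfrac{b-a}{a+b}\in(0,\pi/2)$. A short computation gives $\sin t=(b-a)/(a+b)$, $\cos t=2\sqrt{ab}/(a+b)=G/A$, and hence $(\sin t)/t=P/A$. Substituting $\cos t=G/A$ into the two-sided chain
\[
\frac{2p+(p+3)\cos t}{(3p+1)+2\cos t}<\frac{\sin t}{t}<\frac{2q+(q+3)\cos t}{(3q+1)+2\cos t}
\]
and multiplying through by $A$ reproduces (\ref{P4}) verbatim. As $(a,b)$ varies, $t$ sweeps all of $(0,\pi/2)$, so the characterization $p\in[0,p_{0}]$, $q\in(-\infty,-1]\cup[9,\infty)$ is inherited directly from the sharpness assertions of Theorems \ref{Theorem Mb} and \ref{Theorem Ma}.

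For (\ref{P5}) I would use instead $t=\arctan\tfrac{b-a}{a+b}$; from $\tan t=(b-a)/(a+b)$ and $\sqrt{(a+b)^{2}+(a-b)^{2}}=2Q$ one obtains $\cos t=A/Q$, $\sin t=(b-a)/(2Q)$, and therefore $(\sin t)/t=T/Q$. The same substitution turns the trigonometric chain into (\ref{P5}) after multiplication by $Q$, which yields sufficiency immediately since the relevant range $(0,\pi/4)$ is contained in $(0,\pi/2)$.

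The only delicate point, and the one where I would expect to spend the most care, is that the arctangent substitution only sweeps $(0,\pi/4)$, so the necessity for (\ref{P5}) on its own would not be inherited directly from Theorem \ref{Theorem Mb}. However, the proposition asserts the "iff" for (\ref{P4}) and (\ref{P5}) \emph{jointly}, and (\ref{P4}) alone already forces the stated ranges of $p$ and $q$ via the surjection of the arcsine substitution onto $(0,\pi/2)$. Consequently the combined iff reduces to the two trigonometric identifications displayed above, with no further endpoint bookkeeping required.
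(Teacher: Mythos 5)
Your proposal is correct and follows essentially the same route as the paper, which obtains the proposition purely by the substitutions $t=\arcsin\frac{b-a}{a+b}$ (giving $\frac{\sin t}{t}=\frac{P}{A}$, $\cos t=\frac{G}{A}$) and $t=\arctan\frac{b-a}{a+b}$ (giving $\frac{\sin t}{t}=\frac{T}{Q}$, $\cos t=\frac{A}{Q}$) applied to Theorems \ref{Theorem Ma} and \ref{Theorem Mb}. Your observation that the arctangent substitution only covers $(0,\pi/4)$, so the necessity in (\ref{P5}) must be read off jointly from (\ref{P4}), is a genuine point of care that the paper passes over in silence, and your resolution of it is sound.
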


Making changes of variables $x=\func{arctanh}\frac{b-a}{a+b}$, $\func{arcsinh%
}\frac{b-a}{a+b}$ yield $(\sinh x)/x=L/G$, $\cosh x=A/G$; $(\sinh x)/x=NS/A$%
, $\cosh x=Q/A$, respectively. And then, Theorem \ref{Theorem Mg} can be
equivalently written as

\begin{proposition}
For $a,b>0$ with $a\neq b$, both the two-side inequalities%
\begin{eqnarray}
\frac{2G+\left( 1+3p\right) A}{\left( 3+p\right) G+2pA}G &<&L,  \label{P6} \\
\frac{2A+\left( 1+3p\right) Q}{\left( 3+p\right) A+2pQ}A &<&NS  \label{P7}
\end{eqnarray}%
hold if and only if $p\in (-\infty ,-1]\cup \left[ \frac{1}{9},\infty
\right) $. They are reversed if and only if $p=0$.
\end{proposition}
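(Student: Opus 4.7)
The approach is to reduce (\ref{P6}) and (\ref{P7}) directly to Theorem \ref{Theorem Mg} via the two changes of variable recorded in the sentence preceding the proposition. Since $L$, $G$, $A$, $Q$, $NS$ are symmetric in $a$ and $b$, I may assume throughout that $b>a>0$.

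For (\ref{P6}), I will set $t=\operatorname{arctanh}\tfrac{b-a}{a+b}$, so that $2t=\ln(b/a)$. A direct computation then yields $\cosh t=(a+b)/(2\sqrt{ab})=A/G$, $\sinh t=(b-a)/(2\sqrt{ab})$, and hence $(\sinh t)/t=(b-a)/(\sqrt{ab}\,(\ln b-\ln a))=L/G$. As $(a,b)$ ranges over $b>a>0$, $t$ sweeps $(0,\infty)$ bijectively. Substituting these identities into Theorem \ref{Theorem Mg}, clearing the common factor $1/G$ from the numerator and denominator of the left-hand side, and finally multiplying both sides through by $G$, produces (\ref{P6}); the iff statement and the reversed case at $p=0$ then transfer verbatim from Theorem \ref{Theorem Mg}.

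For (\ref{P7}), I will use $t=\operatorname{arcsinh}\tfrac{b-a}{a+b}$, so that $\sinh t=(b-a)/(a+b)$, $\cosh t=\sqrt{2(a^{2}+b^{2})}/(a+b)=Q/A$, and $(\sinh t)/t=NS/A$. The same algebraic manipulation, now with $A$ in place of $G$, converts Theorem \ref{Theorem Mg} into (\ref{P7}).

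The main obstacle is that here $t$ sweeps only the bounded interval $(0,\operatorname{arcsinh} 1)$ as $(a,b)$ varies, rather than all of $(0,\infty)$, so the iff assertion for (\ref{P7}) requires some care. Sufficiency is immediate from Theorem \ref{Theorem Mg}. For necessity, the plan is to invoke the asymptotic $\lim_{t\to 0^{+}}F(t,p)/t^{5}=(p-1/9)/(20(p+1))$ together with the separate observation $F(t,-1)=\sinh t-t>0$, both of which appear in the proof of Theorem \ref{Theorem Mg}; these survive passage to any subinterval with left endpoint $0$, and when combined with the standing assumption $p\in(-\infty,-1]\cup[0,\infty)$ they force the stated $p$-ranges, closing the argument.
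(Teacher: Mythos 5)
Your substitutions are exactly the paper's (the paper offers no proof beyond the change-of-variables sentence), and your identities $\cosh t=A/G$, $(\sinh t)/t=L/G$ for $t=\operatorname{arctanh}\frac{b-a}{a+b}$ and $\cosh t=Q/A$, $(\sinh t)/t=NS/A$ for $t=\operatorname{arcsinh}\frac{b-a}{a+b}$ are all correct. You also correctly spot the issue the paper glosses over: in \eqref{P7} the variable $t$ only sweeps $\left(0,\ln(1+\sqrt{2})\right)$, so the necessity half of the ``if and only if'' does not transfer verbatim from Theorem \ref{Theorem Mg}. Your repair works for the \emph{forward} direction: $\lim_{t\rightarrow 0^{+}}F(t,p)/t^{5}\geq 0$ gives $\frac{p-1/9}{p+1}\geq 0$, i.e.\ $p\in (-\infty ,-1)\cup [1/9,\infty )$, which together with $F(t,-1)>0$ and the standing assumption yields exactly $(-\infty ,-1]\cup [1/9,\infty )$ with no need of the $t\rightarrow \infty $ limit.

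The gap is in the \emph{reversed} case. There the same asymptotic gives only $\frac{p-1/9}{p+1}\leq 0$, i.e.\ $p\in (-1,1/9]$, which intersected with $(-\infty ,-1]\cup [0,\infty )$ is $[0,1/9]$, not $\{0\}$; in Theorem \ref{Theorem Mg} the point $p=0$ is isolated only by the second necessary condition $\lim_{t\rightarrow \infty }F(t,p)/\sinh t=2p/(1+3p)\leq 0$, which is unavailable on a bounded $t$-interval. And this is not a removable defect of the argument: for $p\in (0,1/9)$ the reversed inequality \eqref{P7} actually \emph{does} hold. Indeed, by \eqref{dF} the sign of $\partial F/\partial t$ is that of $2p(3p+1)x+(3p^{2}+6p-1)$ with $x=\cosh t\in (1,\sqrt{2})$; for $p=1/10$ this is negative throughout $(1,\sqrt{2})$ because its root $\frac{1-6p-3p^{2}}{2p(3p+1)}\approx 1.423>\sqrt{2}$, so $F(t,1/10)<F(0,1/10)=0$ on the whole relevant range, i.e.\ \eqref{P7} is reversed for $p=1/10\neq 0$. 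Consequently the ``reversed iff $p=0$'' claim cannot be proved for \eqref{P7} in isolation; it must be read as a statement about the pair, with the necessity of $p=0$ supplied by \eqref{P6}, where $t$ does range over all of $(0,\infty )$ and the limit $2p/(1+3p)\leq 0$ combined with $p\in (-\infty ,-1]\cup [0,\infty )$ and $F(t,-1)>0$ forces $p=0$. You should make that dependence explicit rather than claiming the $t\rightarrow 0^{+}$ asymptotic alone closes the argument.
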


\subsection{The estimate for the sine integral.}

For the estimations for the sine integral defined by 
\begin{equation*}
\limfunc{Si}\left( x\right) =\int_{0}^{x}\frac{\sin t}{t}dt,
\end{equation*}%
there has some results (see \cite{Qi.JMT.12.4.1996}, \cite{Wu.AML.9.12.2006}%
, \cite{Wu.TJM.12.2.2008}). By our results we can obtain many estimates for $%
\limfunc{Si}\left( x\right) $. Here we give a simpler but more accurate one.

\begin{proposition}
For $x\in (0,\pi /2]$, we have 
\begin{equation}
\frac{4\sqrt{2}-2}{7\pi }\left( x+\sin x+8\sin \frac{x}{2}\right) <\limfunc{%
Si}\left( x\right) <\frac{1}{6}\left( x+\sin x+8\sin \frac{x}{2}\right) .
\label{P8}
\end{equation}
\end{proposition}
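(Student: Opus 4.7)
The plan is to derive (\ref{P8}) by integrating a pointwise two-sided bound for $(\sin t)/t$ that has already been proved, and then to recognise the outcome as essentially Simpson's rule applied to $\operatorname{Si}(x)$. The relevant bound is inequality (\ref{Main d2}) from the Corollary following Theorem \ref{Theorem Md}, namely
\[
\tfrac{4(2\sqrt{2}-1)}{7\pi}\left(\cos^{2}\tfrac{t}{2}+2\cos\tfrac{t}{2}\right)<\frac{\sin t}{t}<\tfrac{1}{3}\left(\cos^{2}\tfrac{t}{2}+2\cos\tfrac{t}{2}\right),
\]
valid for $t\in(0,\pi/2)$, which is itself the special case $p=\infty$ of Theorem \ref{Theorem Md}.

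I would then integrate this double inequality over $(0,x)$ for every $x\in(0,\pi/2]$. The integrated inequalities remain strict because the integrands are continuous and the pointwise inequalities are strict on the open interval; the endpoint $x=\pi/2$ is covered automatically by continuity in the upper limit. The only computations required are the elementary identities
\[
\int_{0}^{x}\cos^{2}\tfrac{t}{2}\,dt=\int_{0}^{x}\tfrac{1+\cos t}{2}\,dt=\tfrac{x+\sin x}{2},\qquad \int_{0}^{x}2\cos\tfrac{t}{2}\,dt=4\sin\tfrac{x}{2},
\]
which combine to give $\int_{0}^{x}\bigl(\cos^{2}(t/2)+2\cos(t/2)\bigr)\,dt=\tfrac{1}{2}\bigl(x+\sin x+8\sin(x/2)\bigr)$.

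Plugging this into the bounds above, the upper estimate becomes $\tfrac{1}{6}(x+\sin x+8\sin(x/2))$, while the lower one becomes $\tfrac{4(2\sqrt{2}-1)}{7\pi}\cdot\tfrac{1}{2}(x+\sin x+8\sin(x/2))=\tfrac{4\sqrt{2}-2}{7\pi}(x+\sin x+8\sin(x/2))$, matching (\ref{P8}) exactly. There is no real obstacle here; the argument is entirely routine once (\ref{Main d2}) is chosen as the starting point. It is worth noting that the upper bound is precisely the Simpson three-point quadrature $\tfrac{x}{6}\bigl(f(0)+4f(x/2)+f(x)\bigr)$ for $\operatorname{Si}(x)$ with $f(t)=(\sin t)/t$ and $f(0)=1$, which explains why the form of the estimate is so clean and why one expects it to be quite accurate.
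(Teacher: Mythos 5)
Your proposal is correct and is essentially identical to the paper's own proof, which likewise integrates the double inequality (\ref{Main d2}) over $[0,x]$ and simplifies using $\int_{0}^{x}\cos^{2}(t/2)\,dt=(x+\sin x)/2$ and $\int_{0}^{x}2\cos(t/2)\,dt=4\sin(x/2)$. The Simpson's-rule observation is a pleasant extra remark but plays no logical role.
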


\begin{proof}
By (\ref{Main d2}) we see that the inequalities%
\begin{equation*}
\tfrac{4\left( 2\sqrt{2}-1\right) }{7}\frac{\cos ^{2}\frac{t}{2}+2\cos \frac{%
t}{2}}{\pi }<\frac{\sin t}{t}<\frac{\cos ^{2}\frac{t}{2}+2\cos \frac{t}{2}}{3%
}
\end{equation*}%
hold for $t\in \left[ 0,\pi /2\right] $. Integrating both sides over $\left[
0,x\right] $ and simple calculation yield (\ref{P8}).
\end{proof}

\begin{remark}
By (\ref{P8}) we have%
\begin{equation*}
1.3682\approx \tfrac{2\sqrt{2}-1}{7\pi }\left( \pi +8\sqrt{2}+2\right)
<\int_{0}^{\pi /2}\frac{\sin t}{t}dt<\frac{1}{12}\left( \pi +8\sqrt{2}%
+2\right) \approx 1.3713.
\end{equation*}
\end{remark}


\begin{thebibliography}{99}
\bibitem{Carver.AMM.65.2.1958} W. B. Carver, Extreme parameters in an
inequality, \emph{Amer. Math. Monthly} \textbf{65}, 2 (1958), 206--209.

\bibitem{Chen.JIA.2011.136} C.-P. Chen and W.-S. Cheung, Sharp Cusa and
Becker-Stark inequalities, \emph{J. Inequal. Appl.} \textbf{2011} (2011):
136.

\bibitem{Chu.JIA.2010.436457} Y.-M. Chu, Y.-F. Qiu, M.-K. Wang and G.-D.
Wang, The optimal convex combination bounds of arithmetic and harmonic means
for the Seiffert's mean, \emph{J. Inequal. Appl.} \textbf{2010} (2010), Art.
ID 436457, 7 pages.

\bibitem{Chu.JIA.2010.146945} Y.-M. Chu, M.-K. Wang and Y.-F. Qiu, An
optimal double inequality between power-type Heron and Seiffert means, \emph{%
J. Inequal. Appl. }\textbf{2010} (2010), Article ID 146945, 11 pdoages,
i:10.1155/2010/146945.

\bibitem{Chu.JIA.2011} Y.-M. Chu, M.-K. Wang, and W.-M. Gong, Two sharp
double inequalities for Seiffert mean, \emph{J. Inequal. Appl.} \textbf{2011}
(2011): 44, 7 pages; available online at
http://dx.doi.org/10.1186/1029-242X-2011-44.

\bibitem{Chu.JIA.2013.10} Y.-M. Chu, B.-Y. Long, W.-M. Gong and Y.-Q. Song,
Sharp bounds for Seiffert and Neuman-S\'{a}ndor means in terms of
generalized logarithmic means, \emph{J. Inequal. Appl.} \textbf{2013},
2013:10; available online at \href{http://www.journalofinequalitiesandapplications.com/content/2013/1/10}%
{http://www.journalofinequalitiesandapplications.com/content/2013/1/10}.

\bibitem{Costin.IJMMS.2012.430692} I. Costin and G. Toader, A nice
separation of some Seiffert type means by power means, \emph{Int. J. of
Math. Math. Sci}. \textbf{2012}, (2012), Art. ID 430692, 6 pages,
doi:10.1155/2012/430692.

\bibitem{Fink.UBPEF.6.1995} A. M. Fink, Two Inequalities, \emph{Univ.
Beograd. Pub. Elekt. Fak.} \textbf{6} (1995), 48--49.

\bibitem{Huygens} C. Huygens, \emph{Oeuvres Completes 1888--1940}, Soci\'{e}%
te Hollondaise des Science, Haga.

\bibitem{Iyengar.6.1945} K.S.K. Iyengar, B.S. Madhava Rao and T.S.
Nanjundiah, Some trigonometrical inequalities, \emph{Half-Yearly J. Mysore
Univ. Sect. B., N. S.} \textbf{6} (1945), 1-12.

\bibitem{Jiang.CM.23.4.2007} W.-D. Jiang, Shafer-Fink inequality and Carlson
inequality, \emph{College Mathematics} \textbf{23}, 4 (2007), 152-154.
(Chinese)

\bibitem{Li.JBUU.24.2.2010} M. Li and D. He, Better estimations for upper
and lower bounds of $\left( \sin x\right) /x$ and its application, \emph{%
Journal of Beijing Union University(Natural Sciences)} \textbf{24}, 2
(2010), 47--48. (Chinese)

\bibitem{Mitrinovic.AI.1970} D. S. Mitrinovi\'{c}, \emph{Analytic
Inequalities}, Springer-Verlag, 1970.

\bibitem{Mortitc.MIA.14.3.2011} C. Mortitc, The natural approach of
Wilker-Cusa-Huygens inequalities, \emph{Math. Inequal. Appl}. \textbf{14}
(2011), 535-541.

\bibitem{Neuman.MP.14.2003} E. Neuman and J. S\'{a}ndor, On the
Schwab-Borchardt mean, \emph{Math. Pannon.} \textbf{14} (2003), 253--266.

\bibitem{Neuman.MIA.13.4.2010} E. Neuman and J. S\'{a}ndor, On some
inequalities involving trigonometric and hyperbolic functions with emphasis
on the Cusa-Huygens, Wilker and Huygens inaequalities, \emph{Math. Inequal.
Appl.} \textbf{13}, 4 (2010), 715--723.

\bibitem{Neuman.JMI.5.4.2011} E. Neuman, Inequalities for the
Schwab-Borchardt mean and their applications, \emph{J. Math. Inequal.} 
\textbf{5}, 4 (2011), 601--609.

\bibitem{Neuman.AIA.1.1.2012} E. Neuman, Refinements and generalizations of
certain inequalities involving trigonometric and hyperbolic functions, \emph{%
Adv. Inequal. Appl.} \textbf{1}, 1 (2012), 1-11.

\bibitem{Oppenheim.AMM.64.6.1957} A. Oppenheim, E1277, \emph{Amer. Math.
Monthly} \textbf{64}, 6 (1957), 504.

\bibitem{Qi.JMT.12.4.1996} F. Qi, Extensions and sharpenings of Jordan's and
Kober's inequality,\ \emph{Journal of Mathematics for Technology} \textbf{12}%
, 4 (1996), 98--102. (Chinese)

\bibitem{Qi.JIA.2009.271923} F. Qi, D.-W. Niu and B.-N. Guo, Refinements,
generalizations, and applications of Jordan's inequality and related
problems, \emph{J. Inequal. Appl.} \textbf{2009} (2009), Art. ID 271923, 52
pages.

\bibitem{Sandor.RGMIA.8.3.2005} J. S\'{a}ndor and M. Bencze, On Huygens's
trigonometric inequality, \emph{RGMIA Research Report Collection} \textbf{8}%
, 3 (2005); available online at
http://www.ajmaa.org/RGMIA/papers/v8n3/Huygens.pdf.

\bibitem{Seiffert.EM.42.1987} H.-J. Seiffert, Werte zwischen dem
geometrischen und dem arithmetischen Mittel zweier Zahlen, \emph{Elem. Math.}
\textbf{42} (1987), 105--107.

\bibitem{Seiffert.DW.29.1995} H.-J. Seiffert, Aufgabe 16,\emph{\ Die Wurzel} 
\textbf{29} (1995) 221--222.

\bibitem{Seiffert.64.2.1995} H.-J. Seiffert, Ungleichungen f\"{u}r
elementare Mittelwerte [Inequalities for elementary means], \emph{Arch.
Math. (Basel)} \textbf{64}, 2 (1995), 2, 129--131 (German).

\bibitem{Shafer.AMM.74.6.1967} R. E. Shafer, Problem E1867, \emph{Amer.
Math. Monthly} \textbf{74}, 6 (1967), 726--727.

\bibitem{Vamanamurthy.183.1994} M. K. Vamanamurthy and M. Vuorinen,
Inequalities for means, \emph{J. Math. Anal. Appl.} \textbf{183}, 155--166
(1994).

\bibitem{Witkowski.MIA.2012.inprint} A. Witkowski, Interpolations of
Schwab-Borchardt mean, \emph{Math. Inequal. Appl.} 2012, in print.

\bibitem{Wu.AML.9.12.2006} S.-H. Wu and L. Debnath, A new generalized and
sharp version of Jordan's inequality and its applications to the improvement
of the Yang Le inequality,\ \emph{Appl. Math. Letters} \textbf{19}, 12
(2006), 1378--1384.

\bibitem{Wu.TJM.12.2.2008} S.-H. Wu, Sharpness and generelization of
Jordan's inequality and its application, \emph{Taiwanese J. Math.} \textbf{12%
}, 2 (2008), 325-336.

\bibitem{Wu.PMD.75.3-4.2009} S.-H. Wu and A. B\'{a}ricz, Generalizations of
Mitrinovi\'{c}, Adamovi\'{c} and Lazarevi\'{c} inequalities and their
applications, \emph{Publ. Math. Debrecen} \textbf{75}, 3--4 (2009), 447--458.

\bibitem{Wu.SCM.11.4.2008} Y.-F. Wu, On generalizations of Jordan and Kober
inequalities, Studies in College Mathematics, \textbf{11}, 4 (2008), 38.
(Chinese)

\bibitem{Yang.arxiv.1206.5494V1} Zh.-H. Yang, Sharp bounds for the second
Seiffert mean in terms of power means, \textbf{2012}; available online at 
\href{http://arxiv.org/pdf/1206.5494v1.pdf}{%
http://arxiv.org/pdf/1206.5494v1.pdf}.

\bibitem{Yang.arxiv.1210.6478} Zh.-H. Yang, The monotonicity results and
sharp inequalities for some power-type means of two arguments, \textbf{2012}%
; available online at \href{http://arxiv.org/pdf/1210.6478.pdf}{%
http://arxiv.org/pdf/1210.6478.pdf}.

\bibitem{Yang.arxiv.1208.0895V1} Zh.-H. Yang, Sharp power means bounds for
Neuman-S\'{a}dor mean, \textbf{2012}; available online at \href{http://arxiv.org/abs/1208.0895}%
{http://arxiv.org/abs/1208.0895}.

\bibitem{Yang.arxiv.1206.4911.2012} Zh.-H. Yang, Refinements of
Mitrinovic-Cusa inequality, http://arxiv.org/pdf/1206.4911.pdf.

\bibitem{Yang.JMI.2013} Zh.-H. Yang, Refinements of a two-sided inequality
for trigonometric functions, \emph{J. Math. Inequal. }accepted.

\bibitem{Yang.arxiv.1206.5502.2012} Zh.-H. Yang, New sharp Jordan type
inequalities and their applications, http://arxiv.org/pdf/1206.5502.pdf.

\bibitem{Zhu.MIA.8.4.2005} L. Zhu, On Shafer-Fink inequalities, \emph{Math.
Inequal. Appl.} \textbf{8}, 4 (2005), 571--574.

\bibitem{Zhu.AAA.2009} L. Zhu, Some New Wilker-Type Inequalities for
Circular and Hyperbolic Functions, \emph{Abstr. Appl. Anal.} \textbf{2009}
(2009), Art. ID 485842, 9 pages, doi:10.1155/2009/485842.

\bibitem{Zhu.CMA.58.2009} L. Zhu, A source of inequalities for circular
functions, \emph{Comput. Math. Appl.} \textbf{58} (2009), 1998-2004.

\bibitem{Zhu.JIA.2010.130821} Ling Zhu, Inequalities for Hyperbolic
Functions and Their Applications, \emph{J. Inequal. Appl.} \textbf{2010}
(2010), Art. ID 130821, 10 pages, doi:10.1155/2010/130821.
\end{thebibliography}
\end{document}